\documentclass[10pt]{amsart}

\setlength{\textwidth}{14.cm}
\setlength{\textheight}{21.5cm}

\usepackage{geometry,graphicx,amssymb,amsmath,amsbsy,eucal,amsfonts,mathrsfs,amscd,bm,tcolorbox}

%\usepackage[all]{xy}
%\usepackage{tikz}
%\usetikzlibrary{arrows,shapes,calc,snakes}
%\usetikzlibrary{shapes,snakes}
%\usepackage{tikz-3dplot}
%\usetikzlibrary{intersections}
%\usepackage{tkz-euclide}
%\usetkzobj{all}
%\usetikzlibrary{angles}
%\usepackage{pgfplots}
%\usepackage{comment}
\usepackage{subcaption}
\geometry{
    letterpaper,
    left=   1.2in,
    right=  1.2in,
    top=    1.25in,
    bottom= 1.25in
}
\linespread{1.}

\numberwithin{equation}{section}

\allowdisplaybreaks[3]

\newtheorem{theorem}{Theorem}[section]
\newtheorem{lemma}[theorem]{Lemma}
\newtheorem{assumption}[theorem]{Assumption}

\newtheorem{proposition}[theorem]{Proposition}
\theoremstyle{definition}

\theoremstyle{remark}

\newcommand{\bl}{\bigl\langle}
\newcommand{\br}{\bigr\rangle}

\newcommand{\dive}{{\ensuremath\mathop{\mathrm{div}\,}}}

\newcommand{\bld}[1]{\boldsymbol{#1}}

\newcommand{\bn}{\bld{n}}

\newcommand{\Dt}{\Delta t}
\newcommand{\uu}{\mathsf{u}}
\newcommand{\ww}{\mathsf{w}}
\newcommand{\qq}{\mathsf{q}}
\newcommand{\lla}{\mathsf{l}}
\newcommand{\Ll}{\mathcal{L}}
\newcommand{\Lla}{\Lambda}

%%%%%%%%

\newcommand{\pdt}{\partial_{\Delta t}}
\newcommand{\pt}{\partial_{t}}
\newcommand{\dt}{\Delta t}
\newcommand{\Z}{\mathsf{Z}}
\newcommand{\Ss}{\mathsf{S}}
\newcommand{\J}{\mathsf{J}}

\title[Parabolic/Wave]{Loosely coupled, non-iterative time-splitting scheme based on Robin-Robin coupling: Unified analysis for Parabolic/Parabolic and Parabolic/Hyperbolic problems}

\author{Erik Burman, Rebecca Durst, Miguel Fern\'andez, and Johnny Guzm\'an}

\begin{document}

\maketitle
\begin{abstract}
    We present a loosely coupled, non-iterative time-splitting scheme based on Robin-Robin coupling conditions. We apply a novel unified analysis for this scheme applied to both a Parabolic/Parabolic coupled system and a Parabolic/Hyperbolic coupled system. We show for both systems that the scheme is stable, and the error converges as $\mathcal{O}\big(\dt \sqrt{T +\log{\frac{1}{\dt}}}\big)$, where $\dt$ is the time step.
\end{abstract}

\section{Introduction}
In the physical world, there are a variety of systems in which two distinct media interact, or are coupled, across an interface. For the numerical simulation of such systems, it is highly desirable to develop loosely coupled (explicit) schemes in which the two media are solved sequentially as separate systems by passing information across the interface via defined coupling conditions. The advantage of such schemes is in their efficiency; indeed the alternative strong coupling, in which a fully coupled system must be solved at each time step, may become burdensome when dealing with large and heterogeneous systems. Additionally, loosely coupled schemes present the ability to used optimized, existing solvers for each subsystem.

In order to develop efficient, loosely coupled schemes, the coupling conditions must be chosen appropriately, as poorly chosen coupling conditions can have catastrophic effects for both stability and accuracy. For example, in the case of fluid-structure interaction (FSI) problems the na\"{i}ve choice of coupling conditions has been proven to give rise to unconditional instability under certain physical regimes (see \cite{causin2005added,le-tallec-mouro-01, FWR07}), a phenomenon known as the \textit{added mass effect}.

The objective of this paper is to present and analyze a loosely coupled scheme based on a Robin-Robin type coupling condition applied to two relevant model problems: a parabolic-parabolic interface problem and a parabolic-hyperbolic interface problem. In the continuous case of both of these systems, the coupling conditions at the interface, $\Sigma$, take the form
\begin{equation*}
    \begin{cases}
     \uu = \pt^{k-1}\ww, & \rm{on} \ \Sigma,\\
     \nu_f \nabla \uu \cdot \bn_f + \nu_s \nabla \ww \cdot \bn_s = 0, & \rm{on} \ \Sigma,
    \end{cases}
\end{equation*}
where $\uu$ and $\ww$ are solutions to the individual subsystems, and $k=1,2$ (see below for more details). For the loosely coupled scheme with Robin-Robin type coupling, we discretize appropriately in time and apply the Robin-type coupling conditions
\begin{equation} \label{robinRobin}
\begin{cases}
 \alpha \pdt^{k-1} w^{n+1} + \nu_s\nabla w^{n+1} \cdot \bn_s = \alpha u^n - \nu_f\nabla u^n \cdot \bn_f, & \rm{on} \ \Sigma,\\
 \alpha u^{n+1} + \nu_f\nabla u^{n+1} \cdot \bn_f = \alpha  \pdt^{k-1} w^{n+1} + \nu_f\nabla u^n \cdot \bn_f, & \rm{on} \ \Sigma,
\end{cases}
\end{equation}
where $u^n$ and $w^n$ are the solutions to the time-discrete scheme, and $\alpha>0$ is the so-called Robin parameter.

\subsection{Previous work}
The development of loosely coupled schemes has important consequences, particularly for FSI problems, which may be posed as parabolic-hyperbolic interface problems with an added pressure term. For parabolic-parabolic interface problems, these schemes have been investigated for simulating problems such as heat transport and windblown particulate flow. In \cite{benes2,benes1}, the authors investigate parabolic-parabolic coupling problems with non-matching grids and heterogeneous time-steps, using the transmission condition $u = w$. 

Robin-Robin type coupling was proposed in \cite{lions1990schwarz} as an iterative domain decomposition method for solving the Poisson problem. More recently, it has been applied to partitioned schemes for time dependent interface problems. In \cite{canuto}, the authors investigate a Robin-Robin coupling akin to our method for advection-diffusion problems, with the intention of applying the methods to the simulation of sand transport. In \cite{stokesDarcy}, Cao \textit{et al}  propose a loosely coupled Robin-Robin approach for the Stokes-Darcy problem and prove it to be optimally convergent. 

In the case of FSI problems, Robin-type interface coupling has emerged as a means to decouple an incompressible fluid and a thick-walled structure without encountering the added-mass effect mentioned above. The success of this method was first explored by Burman and Fern\'{a}ndez in \cite{burman2014explicit}, as a variation of their scheme developed in \cite{BURMAN2009766}, which required extra stabilization. Similar methods had previously been explored for the strongly coupled scheme in \cite{badia2008fluid,GNV10,NV08} and in a Robin-based explicit method proposed in \cite{BHS14}. Recently, this approach was shown to be stable, but with only sub-optimal convergence \cite{burman-durst-guzman-19,burman-durst-guzman-fernandez,bukacSeboldt}. 

\subsection{Optimal convergence, unified analysis, and the time-semi-discrete framework.}

\subsubsection{Optimal convergence.}
The main contribution of this paper is the proof of nearly optimal error estimates for the method. More precisely, we provide an error analysis for the parabolic-parabolic and parabolic-hyperbolic systems, resulting in an nearly optimal error estimate of $\mathcal{O}\big(\dt \sqrt{T+\log{\frac{1}{\dt}}}\big)$, without any additional conditions on the time-step or exponential growth. This is the first proof that we are aware of for nearly optimal convergence for a loosely coupled, non-iterative scheme on these types of systems, which is of particular note, since these systems have many similarities to the FSI problem \cite{burman-durst-guzman-19,burman-durst-guzman-fernandez,bukacSeboldt}.

%as one can present these systems as simplified cases of the standard FSI system. This is %particularly true for the parabolic-hyperbolic interface problem, which may be viewed as %the system in \cite{burman-durst-guzman-19,burman-durst-guzman-fernandez,bukacSeboldt} %without the pressure term and incompressibility. 

As seen in these papers, previous work on the related FSI system has only succeeded in rigorously proving a sub-optimal error estimate of $\mathcal{O}(\sqrt{T}\sqrt{\dt})$. For the non-iterative Robin-Robin method, the previous numerical test for the FSI problem are not fully conclusive as they did not account for sensitivity to physical parameters or were not sufficiently refined in time. In order to better understand the convergence rates of Robin-Robin coupling methods, we took a step back and herein perform the analysis on the simplified cases of parabolic/parabolic and parabolic/hyperbolic coupling.  We should mention that some papers have developed methods where higher-order convergence has been observed numerically (e.g. second order convergence), such as \cite{burman2014explicit,bukavc2021refactorization,BHS14}, however these methods relied on sub-iterations  or predictor-corrector approach. A rigorous error analysis of higher-order methods has not been carried out, to the best of our knowledge. 

First-order convergence for a Robin-Robin splitting method has been proven for the Stokes-Darcy problem in \cite{stokesDarcy}, which has parallels to the parabolic/parabolic problem investigated in this paper. However, the two problems are distinct in that our analysis deals with an impenetrable boundary and encounters first order terms (specifically $\nabla u \cdot \bn$) on the interface $\Sigma$ that must be treated with care. The authors in \cite{stokesDarcy}, on the other hand, able to avoid a gradient term on the interface.  

In fact, the key to proving our nearly optimal estimates, unlike in the previous analyses of the FSI problem, is  extending the first order terms from the interface $\Sigma$ into the interior using a cut-off function technique (see Section \ref{sectiong3}). We may then apply a summation by parts technique to prove the lifted terms are nearly first order.

\subsubsection{Unified analysis} 
Of particular interest to us in this work is the variety of interface problems for which the Robin-Robin type coupling proves effective. As such, we investigate the coupling's stability and convergence for the parabolic-parabolic and the parabolic-hyperbolic interface problems, as we believe they serve as simplified yet informative models of systems in which Robin-Robin coupling has been applied successfully. For example, in addition to the Stokes-Darcy problem explored in \cite{stokesDarcy}, the parabolic-parabolic interface problem has significant parallels to the methods introduced in \cite{canuto}, and the parabolic-hyperbolic interface problem may be viewed as a simplified version of the standard FSI model without the pressure term and divergence-free condition. However, unlike \cite{canuto}, we do not present a multi-timestep approach, and we are able to assume a homogeneous time step on the whole system.

In order to demonstrate the potentially wide applicability of the Robin-Robin type coupling conditions, we present a novel unified analysis for the parabolic-parabolic interface problem and the parabolic-hyperbolic interface problem, in which the parameter $k=1,2$ serves to distinguish between the two systems. The success of this unified analysis may have interesting implications, especially with respect to its use as a domain decomposition method. For example, it may indicate that the Robin-Robin type coupling does not heavily affect the system beyond the points on the boundary. 

\subsubsection{Time-semi-discrete framework.}
In this paper, we are primarily interested in the convergence in time in order to gain an understanding of the affects of the Robin-Robin coupling conditions. Thus we choose to only discretize in time and analyze the splitting methods in a semi-discrete framework. Hence, no spatial discretization is considered in this paper. 

It should be noted, however, that if standard finite elements are used for the spacial discretization, the stress terms $\nu_f \nabla u \cdot \bn_f$ and $\nabla \pdt^{k-1} w \cdot \bn_s$ in \eqref{robinRobin} will pose an issue with continuity across element boundaries. Therefore, in order to make our work more amenable to a fully discrete framework, we cast our scheme as a Lagrange multiplier method, inspired by the fully discrete scheme in \cite{burman-durst-guzman-fernandez} (see Algorithm 2), which was shown to be a result of a variationally consistent representation of the interface stress involving a lifting operator (see also \cite{le-tallec-mouro-01} and \cite{burman2014explicit}).

The outline of the paper is as follows. In Section \ref{problems}, we present the two interface problems and introduce the unified form and variational formulation that we use in the remainder of the paper. In Section \ref{discrete}, we introduce the time-discrete Robin-Robin coupling method for which we provide a stability analysis in Section \ref{stability}. Then, in Section \ref{error1} we prove the error estimate using a novel extension from the interface $\Sigma$ into the subdomain $\Omega_f$, and in Section \ref{sec:phi} we provide a construction for this extension. Finally, in Section \ref{num} we provide numerical experiments to accompany the results of the previous sections.

\section{The Parabolic-Parabolic and Parabolic-Hyperbolic Problems}\label{problems}
For the two systems that we will consider, let $\Omega$ be a domain that can be decomposed as $\Omega=\Omega_f \cup \Omega_s$ where the common interface $\Sigma=\partial \Omega_f \cap \partial \Omega_s$, like that shown in Figure \ref{fig:fig1}. Below, we define the two interface problems separately before presenting them in their unified form.
\begin{figure}[h]
\includegraphics[width=0.75\linewidth]{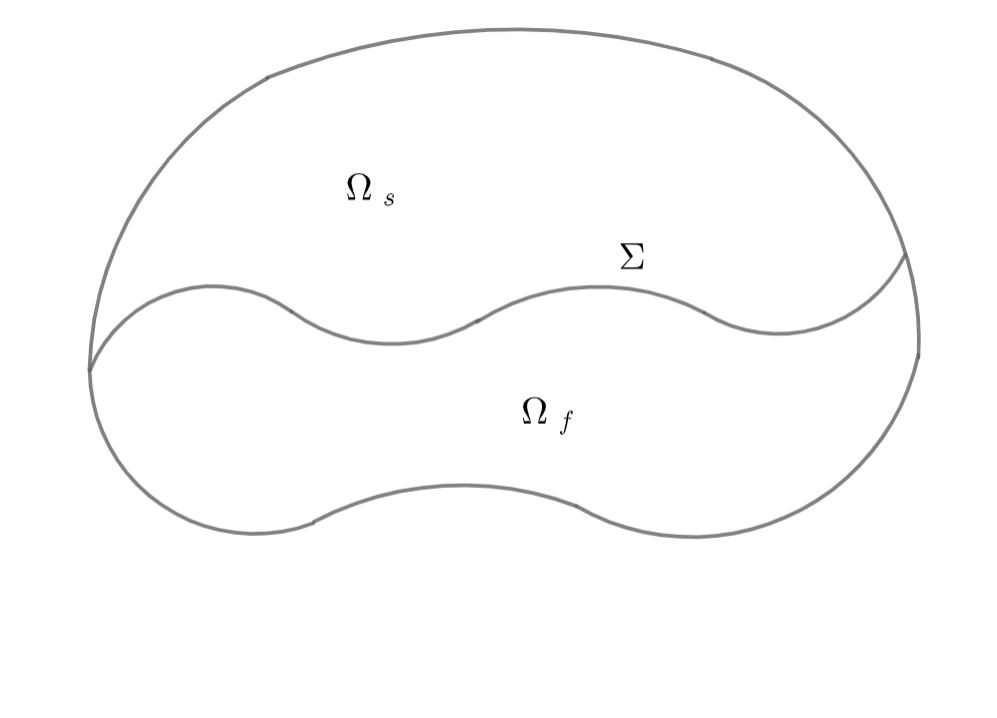}
\caption{An example of the domains $\Omega_s$ and $\Omega_f$ with interface $\Sigma$.\label{fig:fig1}}
\end{figure}

\subsection{The Parabolic-Parabolic problem}
We first consider the standard parabolic problem,
\begin{subequations}\label{noInt}
\begin{alignat}{2}
\pt y-\dive (\nu \nabla y)=&0, \quad && \text{ in } (0, T) \times \Omega, \\
y(0, x)=& y_0(x), \quad  && \text{ in } \Omega, \\
y=&0,  \quad && \text{ on } (0, T) \times \partial \Omega,
\end{alignat}
\end{subequations}
where $\nu$ is a piece-wise constant function given by
\begin{equation}\label{viscosity}
\nu (x) :=
\begin{cases}
\nu_f, & x \in \Omega_f, \\
\nu_s, & x \in \Omega_s.
\end{cases}
\end{equation}

We may then write \eqref{noInt} as an interface problem, where $ \uu := y|_{\Omega_f}$ and $\ww :=y|_{\Omega_s} $, since this is the form we will use for the analysis of the method:
\begin{subequations}\label{PPsplitF}
\begin{alignat}{2}
\pt \uu- \nu_f\Delta \uu=&0, \quad && \text{ in } (0, T) \times \Omega_f, \\
\uu(0, x)=& \uu_0(x), \quad  && \text{ in } \Omega_f, \\
\uu=&0 , \quad && \text{ on }  (0, T) \times \partial \Omega_f \setminus \Sigma,
\end{alignat}
\end{subequations}

\begin{subequations}\label{PPsplitS}
\begin{alignat}{2}
\pt \ww-\nu_s \Delta \ww=&0, \quad && \text{ in }(0, T) \times \Omega_s, \\
\ww(0, x)=& \ww_0(x), \quad  && \text{ in } \Omega_s, \\
\ww=&0,  \quad && \text{ on } (0, T) \times \partial \Omega_s \setminus \Sigma,
\end{alignat}
\end{subequations}
and
\begin{subequations}\label{PPsplitI}
\begin{alignat}{2}
\ww - \uu =& 0, \quad && \text{ in } (0, T) \times \Sigma, \\
\nu_s\nabla \ww \cdot \bn_s + \nu_f \nabla \uu \cdot \bn_f =& 0 \quad && \text{ in } (0, T) \times \Sigma.
\end{alignat}
\end{subequations}

where $\bn_f$ and $\bn_s$ are the outward facing normal vectors for $\Omega_f$ and $\Omega_s$, respectively.

\subsection{The Parabolic-Hyperbolic problem}
Similarly, we consider the standard parabolic and hyperbolic problems in $\Omega_f$ and $\Omega_s$, respectively as an interface problem, where $\nu_f$ and $\nu_s$ are defined as in \eqref{viscosity}.

\begin{subequations}\label{WPsplitF}
\begin{alignat}{2}
\pt\uu- \nu_f\Delta \uu=&0, \quad && \text{ in } (0, T) \times \Omega_f, \\
\uu(0, x)=& \uu_0(x), \quad  && \text{ on } \Omega_f, \\
\uu=&0 , \quad && \text{ on }  (0, T) \times \partial \Omega_f \setminus \Sigma,
\end{alignat}
\end{subequations}

\begin{subequations}\label{WPsplitS}
\begin{alignat}{2}
\pt\qq-\nu_s \Delta \ww=&0, \quad && \text{ in } (0, T) \times \Omega_s, \\
\qq =& \pt\ww, \quad && \text{ in } (0, T) \times \Omega_s, \\
\qq(0,x) =& \qq_0(x), \quad && \text{ on } \Omega_s, \\
\ww(0, x)=& \ww_0(x), \quad  && \text{ on } \Omega_s, \\
\ww=&0,  \quad && \text{ on }  (0, T) \times \partial \Omega_s \setminus \Sigma,
\end{alignat}
\end{subequations}
and
\begin{subequations}\label{WPsplitI}
\begin{alignat}{2}
\qq - \uu =& 0, \quad && \text{ in } (0, T) \times \Sigma, \\
\nu_s\nabla \ww \cdot \bn_s + \nu_f \nabla \uu \cdot \bn_f =& 0, \quad && \text{ in } (0, T) \times \Sigma.
\end{alignat}
\end{subequations}
where $\bn_f$ and $\bn_s$ are defined as before.

\subsection{The generalized system}
To avoid redundancy in the analyses, we will work instead with a unified version that can represent either the parabolic-parabolic system described in \eqref{noInt}-\eqref{PPsplitI}, or the parabolic-hyperbolic system described in \eqref{WPsplitF}-\eqref{WPsplitI}, depending on the choice of integer, $k$. More specifically, when $k=1$, we recover \eqref{PPsplitF}-\eqref{PPsplitI}, and when $k=2$, we recover \eqref{WPsplitF}-\eqref{WPsplitI}.  

%In what follows, we define, for any non-negative integer $m$,
%\begin{equation*}
%    \pt^{m} v = 
%    \begin{cases}
%     v, &  m=0, \\
%     \pt^m v, &  m > 0,
%    \end{cases}
%\end{equation*}
Then our generalized system may be written for $k=1,2$,
\begin{subequations}\label{GsplitF}
\begin{alignat}{2}
\pt\uu- \nu_f\Delta \uu=&0,\quad && \text{ in } (0, T) \times \Omega_f, \\
\uu(0, x)=& \uu_0(x), \quad  && \text{ on } \Omega_f, \\
\uu=&0 , \quad && \text{ on }  [0, T] \times \partial \Omega_f \setminus \Sigma,
\end{alignat}
\end{subequations}
\begin{subequations}\label{GsplitS}
\begin{alignat}{2}
\pt\qq-\nu_s \Delta \ww=&0, \quad && \text{ in } (0, T) \times \Omega_s, \\
\qq =& \pt^{k-1}\ww, \quad && \text{ in } (0, T) \times \Omega_s, \\
\ww(0, x)=& \ww_0(x), \quad  && \text{ on } \Omega_s, \\
\qq(0, x)=& \qq_0(x), \quad  && \text{ on } \Omega_s, \label{qq0} \\
\ww=&0,  \quad && \text{ on }  (0, T) \times \partial \Omega_s \setminus \Sigma,
\end{alignat}
\end{subequations}
and
\begin{subequations}\label{GsplitI}
\begin{alignat}{2}
\qq - \uu =& 0, \quad && \text{ in } (0, T) \times \Sigma, \\
\nu_s\nabla \ww \cdot \bn_s + \nu_f \nabla \uu \cdot \bn_f =& 0, \quad && \text{ in } (0, T) \times \Sigma.
\end{alignat}
\end{subequations}
When $k=1$ we assume that $\qq_0=\ww_0$ and, hence, \eqref{qq0} is redundant in that case.  

\subsection{Variational form of the generalized system}
Let $(\cdot, \cdot)_i$ be the $L^2$-inner product on $\Omega_i$ for $i=f, s$. Moreover, let $\bl \cdot, \cdot \br$ be the $L^2$-inner product on $\Sigma$.  Let $N>0$ be an integer, and define $\Dt = \frac{T}{N}$, and let $\uu^{n} = \uu(t_n, \cdot)$, where $t_n = n\Dt$ for $n \in \{0, 1, 2, \dots, N\}$. We consider the spaces
\begin{alignat*}{1}
 V_f=&\{ v \in H^1(\Omega_f): v=0 \text{ on }  \partial \Omega_f \backslash \Sigma \},  \\
 V_s= &\{ v \in H^1(\Omega_s): v=0 \text{ on  } \partial \Omega_s \backslash \Sigma \},  \\
V_g= & L^2(\Sigma).
\end{alignat*}

 Define  $\lla^{n+1}= \nu_f \nabla \uu^{n+1} \cdot \bn_f$ and  assuming that $\lla^{n+1} \in L^2(\Sigma)$ for all $n$, the solutions to \eqref{GsplitF}-\eqref{GsplitI} at time $t_{n+1}$ also satisfy the following problem:\\

Find $\uu^{n+1} \in V_f$, $\qq^{n+1},\ww^{n+1}  \in V_s$, and $\lla^{n+1} \in V_g $ such that
\begin{subequations}\label{var}
\begin{alignat}{2}
(\partial_t \qq^{n+1},z )_s+ \nu_s (\nabla \ww^{n+1}, \nabla z)_s+ \bl \lla^{n+1}, z\br=&0 , \quad && z \in V_s\label{var1} \\
(\partial_t \uu^{n+1}, v)_f+\nu_f (\nabla \uu^{n+1}, \nabla v)_f- \bl \lla^{n+1}, v\br=&0, \quad &&v \in V_f \label{var2}\\
(\qq^{n+1} - \partial_t^{k-1} \ww^{n+1}, r)_s =&0, \quad &&r \in V_s \label{var3} \\
\bl \qq^{n+1}-\uu^{n+1}, \mu \br=&0, \quad && \mu \in V_g \label{var4}
\end{alignat}
\end{subequations}

\section{Robin-Robin coupling: Time discrete method} \label{discrete}

For the time discrete Robin-Robin method, we use a backward-Euler method for the parabolic system on $\Omega_f$. When $k=1$, we also use a backward-Euler method for the resulting parabolic system on $\Omega_s$, and when $k=2$, we use a Newmark method for the resulting hyperbolic system on $\Omega_s$. Thus, we define the discrete derivative in time
\begin{alignat*}{1}
\pdt v^{n+1}=\frac{v^{n+1}-v^n}{\dt},
\end{alignat*}
and the discrete average in time
\begin{alignat*}{1}
v^{n+1/2}=\frac{v^{n+1}+v^n}{2}.
\end{alignat*}
We will also use the notation
\begin{alignat*}{1}
\pdt^j v &=
\begin{cases}
 v, & j= 0, \\
 \pdt v, & j=1, \\
\end{cases}
\end{alignat*}

The Robin-Robin splitting method is as follows. \\

Find $q^{n+1}, w^{n+1}  \in V_s$, $u^{n+1} \in V_f$, and $\lambda^{n+1} \in V_g $ such that, for $k=1,2$ and $n\geq 0$,
\begin{subequations}\label{s}
\begin{alignat}{2}
(\pdt q^{n+1}, z)_s+\nu_s(\nabla w^{n+1/k}, \nabla z)_s + \alpha \bl \pdt^{k-1} w^{n+1} - u^n, z \br + \bl \lambda^n, z\br=&0 , \quad && z \in V_s, \label{s1}\\
(q^{n+1/k} - \pdt^{k-1} w^{n+1}, r)_s =&0 , \quad && r \in V_s, \label{s4}\\
(\pdt u^{n+1}, v)_f+\nu_f(\nabla u^{n+1}, \nabla v)_f- \bl \lambda^{n+1}, v\br=&0, \quad &&v \in V_f,  \label{s2}\\
\bl \alpha(u^{n+1}-\pdt^{k-1} w^{n+1})+(\lambda^{n+1}-\lambda^n), \mu \br=&0, \quad && \mu \in V_g,  \label{s3}
\end{alignat}
\end{subequations}
We note here that the two sub-problems in \eqref{s} are well-posed. 

As a consequence of \eqref{s3} we have 
\begin{equation}\label{con}
 \alpha(u^{n+1}-\pdt^{k-1} w^{n+1})=\lambda^n-\lambda^{n+1}, \quad \text{ on } \Sigma.
\end{equation}
Similarly, from \eqref{s4} we can show
\begin{equation}\label{strong}
q^{n+1/k} = \pdt^{k-1}w^{n+1}, \quad \text{ on } \Omega_s, 
\end{equation}
and thus
\begin{equation}\label{strong2}
 \alpha(u^{n+1}-q^{n+1/k})=\lambda^n-\lambda^{n+1},  \quad \text{ on } \Sigma.
\end{equation}

In practice, this splitting method would be implemented sequentially. We assume that information from the previous time-step is known, specifically $q^{n}$, $w^n$, $u^n$, and $\lambda^n$, so $q^{n+1}$ and $w^{n+1}$ may be solved using \eqref{s1} and \eqref{s4}. This solution is then applied as data in \eqref{s3}, which serves as an interface coupling equation. We may then solve for $u^{n+1}$ and $\lambda^{n+1}$ using \eqref{s2} and \eqref{s3}. 

In this way, the Robin-Robin splitting method is a loosely coupled scheme for the numerical time semi-discrete approximation of \eqref{GsplitF}-\eqref{GsplitI}.

\section{Stability} \label{stability}
In this section we prove stability of the method \eqref{s}. We will use the following identity where $(\cdot, \cdot)$ is an inner product and $\| \cdot \|$ is the corresponding norm. 
\begin{equation}\label{id}
(\phi-\psi, \theta)=\frac{1}{2}(\|\phi\|^2-\|\psi\|^2+ \|\theta-\psi\|^2 - \|\theta - \phi\|^2).
\end{equation}

We need to define the following quantities:
\begin{alignat*}{1}
Z^{n+1}:= & \frac{1}{2}\|q^{n+1}\|_{L^2(\Omega_s)}^2+ \frac{1}{2} \|u^{n+1}\|_{L^2(\Omega_f)}^2  + \frac{(k-1)\nu_s}{2} \|\nabla w^{n+1}\|_{L^2(\Omega_s)}^2 +\frac{\dt \alpha}{2} \|u^{n+1}\|_{L^2(\Sigma)}^2 \\
&+ \frac{\dt}{2\alpha} \|\lambda^{n+1}\|_{L^2(\Sigma)}^2 \\ 
S^{n+1}:= & \nu_f \dt \|\nabla u^{n+1}\|_{L^2(\Omega_f)}^2+  (2-k)\nu_s\dt \|\nabla w^{n+1}\|_{L^2(\Omega_s)}^2 +  \frac{(2-k)}{2}\|q^{n+1}-q^n\|_{L^2(\Omega_s)}^2\\
&+ \frac{1}{2} \|u^{n+1}-u^n\|_{L^2(\Omega_f)}^2  + \frac{\dt \alpha}{2}\|q^{n+1/k}-u^n\|_{L^2(\Sigma)}^2.
\end{alignat*}

\begin{lemma}
It holds,
\begin{equation*}
Z^N+\sum_{n=0}^{N-1} S^{n+1} =Z^0. 
\end{equation*}

\end{lemma}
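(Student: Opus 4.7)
The plan is to derive a per-step energy identity $Z^{n+1} - Z^n + S^{n+1} = 0$ and then telescope from $n=0$ to $n=N-1$. To obtain this identity I would test \eqref{s1} with $z = \dt\, q^{n+1/k}$, \eqref{s2} with $v = \dt\, u^{n+1}$, and \eqref{s3} with $\mu = \dt\, \lambda^{n+1}/\alpha$, and add the three resulting equations. The consequence \eqref{strong} of \eqref{s4}, namely $q^{n+1/k} = \pdt^{k-1} w^{n+1}$ on $\Omega_s$, is used implicitly so that the Robin term $\alpha\bl \pdt^{k-1}w^{n+1}-u^n, z\br$ in \eqref{s1} becomes $\alpha\dt\bl q^{n+1/k}-u^n, q^{n+1/k}\br$; no separate test of \eqref{s4} is needed.

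After summation the $\lambda$-dependent cross terms from \eqref{s1}--\eqref{s3} collapse to $\dt\bl \lambda^n - \lambda^{n+1}, q^{n+1/k}\br$, which by \eqref{strong2} equals $\alpha\dt\bl u^{n+1}-q^{n+1/k}, q^{n+1/k}\br$; combined with the $\alpha\dt\bl q^{n+1/k}-u^n, q^{n+1/k}\br$ piece from \eqref{s1}, this reduces the full interface cross-contribution to $\alpha\dt\bl u^{n+1}-u^n, q^{n+1/k}\br + \tfrac{\dt}{\alpha}\bl \lambda^{n+1}-\lambda^n, \lambda^{n+1}\br$. Each remaining time-difference inner product is then split via the polarization identity \eqref{id}. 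Applied to $(\pdt q^{n+1}, q^{n+1/k})_s$ this gives $\tfrac{1}{2}(\|q^{n+1}\|^2_{L^2(\Omega_s)} - \|q^n\|^2_{L^2(\Omega_s)})$, with the leftover $\tfrac{1}{2}\|q^{n+1}-q^n\|^2_{L^2(\Omega_s)}$ appearing only when $q^{n+1/k}=q^{n+1}$ (i.e., $k=1$), matching the $(2-k)/2$ weight in $S^{n+1}$. Similarly, $\nu_s\dt(\nabla w^{n+1/k}, \nabla q^{n+1/k})_s$ becomes $\nu_s\dt\|\nabla w^{n+1}\|^2_{L^2(\Omega_s)}$ for $k=1$ (since $q^{n+1}=w^{n+1}$) and $\tfrac{\nu_s}{2}(\|\nabla w^{n+1}\|^2_{L^2(\Omega_s)} - \|\nabla w^n\|^2_{L^2(\Omega_s)})$ for $k=2$ (since $q^{n+1/2}=\pdt w^{n+1}$), reproducing the $(2-k)\nu_s\dt$ and $(k-1)\nu_s/2$ coefficients in $S^{n+1}$ and $Z^{n+1}-Z^n$.

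The main obstacle is the boundary cancellation that makes the scheme stable. Applying \eqref{id} to $\alpha\dt\bl u^{n+1}-u^n, q^{n+1/k}\br$ with $\phi=u^{n+1}$, $\psi=u^n$, $\theta=q^{n+1/k}$ yields $\tfrac{\alpha\dt}{2}(\|u^{n+1}\|^2_{L^2(\Sigma)} - \|u^n\|^2_{L^2(\Sigma)}) + \tfrac{\alpha\dt}{2}\|q^{n+1/k}-u^n\|^2_{L^2(\Sigma)} - \tfrac{\alpha\dt}{2}\|q^{n+1/k}-u^{n+1}\|^2_{L^2(\Sigma)}$; the last, negative, term would destroy positivity. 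Invoking \eqref{strong2} once more gives $\|\lambda^{n+1}-\lambda^n\|^2_{L^2(\Sigma)} = \alpha^2\|u^{n+1}-q^{n+1/k}\|^2_{L^2(\Sigma)}$, so the $\tfrac{\dt}{2\alpha}\|\lambda^{n+1}-\lambda^n\|^2_{L^2(\Sigma)}$ produced by applying \eqref{id} to the $\lambda$ term cancels the bad residual exactly. What remains is precisely $Z^{n+1}-Z^n+S^{n+1}=0$; telescoping over $n=0,\ldots,N-1$ yields the claim.
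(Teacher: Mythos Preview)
Your argument is correct. The overall strategy---testing \eqref{s1} with $z=\dt\,q^{n+1/k}$, \eqref{s2} with $v=\dt\,u^{n+1}$, invoking \eqref{strong} and \eqref{strong2}, and splitting differences via \eqref{id}---is the same as the paper's. The only real difference is in how the boundary terms are organized: you additionally test \eqref{s3} with $\mu=\dt\,\lambda^{n+1}/\alpha$, which lets you reduce the interface contribution to $\alpha\dt\bl u^{n+1}-u^n,q^{n+1/k}\br + \tfrac{\dt}{\alpha}\bl \lambda^{n+1}-\lambda^n,\lambda^{n+1}\br$ and then cancel the sign-indefinite leftover $-\tfrac{\alpha\dt}{2}\|q^{n+1/k}-u^{n+1}\|_{L^2(\Sigma)}^2$ against $\tfrac{\dt}{2\alpha}\|\lambda^{n+1}-\lambda^n\|_{L^2(\Sigma)}^2$ directly via \eqref{strong2}. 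The paper instead never tests \eqref{s3}; it uses only the strong identity \eqref{strong2} to rewrite $J^{n+1}$ symmetrically in $u$ and $\lambda$, completes the square $-\tfrac{\alpha}{2}\|(u^n-u^{n+1})+\tfrac{1}{\alpha}(\lambda^n-\lambda^{n+1})\|_{L^2(\Sigma)}^2$, and only then identifies that square with $-\tfrac{\alpha}{2}\|u^n-q^{n+1/k}\|_{L^2(\Sigma)}^2$. Your route is slightly more direct; the paper's makes the cancellation mechanism (completion of the square) more visible. Either way the per-step identity $Z^{n+1}+S^{n+1}=Z^n$ drops out and the telescoping sum finishes the proof.
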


\begin{proof}
 If we set $z=\dt q^{n+1/k}$ and $v=\dt u^{n+1}$ in \eqref{s1} and \eqref{s2}, respectively, and apply \eqref{strong}, we get, for $n\geq 0$, 
\begin{alignat}{1}
& \frac{1}{2}\|q^{n+1}\|_{L^2(\Omega_s)}^2+ \frac{1}{2} \|u^{n+1}\|_{L^2(\Omega_f)}^2 +  \frac{(2-k)}{2}\|q^{n+1}-q^n\|_{L^2(\Omega_s)}^2+ \frac{1}{2} \|u^{n+1}-u^n\|_{L^2(\Omega_f)}^2 \nonumber  \\
 & + \frac{(k-1)\nu_s}{2} \|\nabla w^{n+1}\|_{L^2(\Omega_s)}^2+ (2-k)\nu_s\dt \|\nabla w^{n+1}\|_{L^2(\Omega_s)}^2+  \nu_f\dt \|\nabla u^{n+1}\|_{L^2(\Omega_f)}^2 \nonumber \\
  & =  \frac{1}{2}\|q^{n}\|_{L^2(\Omega_s)}^2+ \frac{1}{2} \|u^{n}\|_{L^2(\Omega_f)}^2+ \frac{(k-1) \nu_s}{2} \|\nabla w^{n}\|_{L^2(\Omega_s)}^2 +\dt J^{n+1}. \label{aux213}
\end{alignat}
where
\begin{alignat*}{1}
J^{n+1}:=& -  \alpha \bl \pdt^{k-1}w^{n+1} - u^n, q^{n+1/k} \br-   \bl \lambda^n, q^{n+1/k}\br+ \bl \lambda^{n+1}, u^{n+1}\br.
\end{alignat*}
Using \eqref{strong} and \eqref{strong2} and after some manipulations we have
\begin{alignat*}{1}
J^{n+1}=& \alpha \bl u^n-u^{n+1}, u^{n+1} \br+ \frac{1}{\alpha} \bl  \lambda^n-\lambda^{n+1}, \lambda^{n+1} \br- \bl u^n-u^{n+1}, \lambda^n-\lambda^{n+1} \br. 
\end{alignat*}
If we use we \eqref{id} on the first two terms we obtain
\begin{alignat*}{1}
J^{n+1} =& \frac{\alpha}{2}(\|u^n\|_{L^2(\Sigma)}^2 - \|u^{n+1}\|_{L^2(\Sigma)}^2)  + \frac{1}{2\alpha}(\|\lambda^n\|_{L^2(\Sigma)}^2- \|\lambda^{n+1}\|_{L^2(\Sigma)}^2) \\
&-   \frac{\alpha}{2}\|u^n-u^{n+1}\|_{L^2(\Sigma)}^2 -\frac{1}{2\alpha}\|\lambda^n-\lambda^{n+1}\|_{L^2(\Sigma)}^2  - \bl u^n-u^{n+1}, \lambda^n-\lambda^{n+1} \br.
\end{alignat*}
Thus, a simple identity applied to the last three terms gives
\begin{alignat}{1}
J^{n+1} =& \frac{\alpha}{2}(\|u^n\|_{L^2(\Sigma)}^2 - \|u^{n+1}\|_{L^2(\Sigma)}^2)  + \frac{1}{2\alpha}(\|\lambda^n\|_{L^2(\Sigma)}^2 - \|\lambda^{n+1}\|_{L^2(\Sigma)}^2) \nonumber \\
&- \frac{\alpha}{2}\|(u^n-u^{n+1})+ \frac{1}{\alpha}(\lambda^n-\lambda^{n+1}) \|_{L^2(\Sigma)}^2. \label{aux561}
\end{alignat}
Finally, using \eqref{strong2} we have $u^n-u^{n+1}+ \frac{1}{\alpha}(\lambda^n-\lambda^{n+1})=u^n -q^{n+1/k}$ and so  plugging this result into  \eqref{aux213}, we arrive at  
\begin{equation*}
    Z^{n+1} + S^{n+1} = Z^n.
\end{equation*}
Taking the sum yields the final result.
\end{proof}

\section{Error Analysis}\label{error1}
In this section we prove the main error estimate. We start by deriving the error equations. We denote the error variables
\begin{alignat*}{2}
U^n&=\uu^n-u^n, \quad  Q^n &&= \qq^n-q^n, \\ W^n&=\ww^n-w^n, \quad  \Lla^n &&=\lla^n-\lambda^n.
\end{alignat*}
We assume that we chose the initial conditions of the splitting method to be exactly the initial conditions of the coupled problem and so these quantities vanish when $n=0$. Using \eqref{s1}-\eqref{s3} and \eqref{var1}-\eqref{var4}, we recover the error equations. We thus have
\begin{subequations}\label{errorRR}
\begin{alignat}{1}\label{errorRR1}
(\pdt Q^{n+1}, z)_s+ \nu_s(\nabla W^{n+1/k}, \nabla z)_s+\alpha \bl (\pdt^{k-1} W^{n+1} - U^n), z\br+ \bl \Lla^n, z \br =& L_1(z) - L_4(z) , \\
(Q^{n+1/k} -\pdt^{k-1} W^{n+1} , r)_s =& L_2(r), \label{errorRR12} \\
 (\pdt U^{n+1}, v)_f+ \nu_f (\nabla U^{n+1}, \nabla v)_f -  \bl  \Lla^{n+1},v\br
= & L_3(v), \label{errorRR2}\\
 \alpha \bl U^{n+1} - \pdt^{k-1} W^{n+1},\mu\br +\bl \Lla^{n+1} - \Lla^{n}, \mu\br = & L_4(\mu).\label{errorRR3}
\end{alignat}
\end{subequations}
where 
\begin{alignat*}{1}
L_1(z) :=&\frac{(k-1)}{2}\bl g_2^{n+1}, z \br + \alpha \bl g_1^{n+1}, z \br - ( h_1^{n+1}, z)_s,\\
L_2(r) :=& (h_3^{n+1}, r)_s,\\
L_3(v):=&-(h_2^{n+1},v)_f,\\
L_4(\mu):=& \alpha \bl h_4^{n+1}, \mu \br + \bl g_2^{n+1}, \mu \br,
\end{alignat*}
and
\begin{alignat*}{2}
h_1^{n+1} &:= \pt \qq^{n+1/k} - \pdt \qq^{n+1}, \quad && g_1^{n+1} := \uu^{n+1}-\uu^n, \\
h_2^{n+1} &:= \pt \uu^{n+1} - \pdt \uu^{n+1}, \quad && g_2^{n+1} := \lla^{n+1} - \lla^n, \\
h_3^{n+1} &:= \pt^{k-1} \ww^{n+1/k} - \pdt^{k-1} \ww^{n+1}, \\
h_4^{n+1} &:= \pt^{k-1}\ww^{n+1} - \pdt^{k-1} \ww^{n+1}. 
\end{alignat*}
We note that to determine \eqref{errorRR1}, we used the following form of \eqref{s1}
\begin{equation*}
(\pt \qq^{n+1/k},z)_S + \nu_s (\nabla \ww^{n+1/k},\nabla z)_s + \bl \lla^{n+1/k},z\br =0, \quad \forall z \in V_s.
\end{equation*}

We also note that as a direct consequence of \eqref{errorRR12}, we may write
\begin{equation}\label{errorStrong}
Q^{n+1/k} = \pdt^{k-1} W^{n+1} + h_3^{n+1}, \quad \text{ on } \Omega_s.
\end{equation}

It also follows from \eqref{errorRR3} that
\begin{equation}\label{errorStrong2}
\alpha(U^{n+1} - \pdt^{k-1} W^{n+1} ) = \Lambda^n - \Lambda^{n+1} +\alpha h_4^{n+1}+ g_2^{n+1}, \quad \text{ on } \Sigma.
\end{equation}
Thus, using the fact that $\pt^{k-1}\ww^n = \uu^n$ on $\Sigma$, \eqref{errorStrong} and \eqref{errorStrong2} combine so that we may write
\begin{equation}\label{errorStrong3}
\alpha \big( U^{n+1} - Q^{n+1/k} \big) =  \Lla^{n} - \Lla^{n+1} +g_3^{n+1},   \quad \text{ on } \Sigma.
\end{equation}
where 
\begin{equation*}
g_3^{n+1}:= \alpha \frac{(k-1)}{2}  g_1^{n+1}+ g_2^{n+1}.
\end{equation*}

We define the quantities that will allow to perform the error analysis. 
\begin{alignat*}{1}
\Z^{n+1}:= & \frac{1}{2}\|Q^{n+1}\|_{L^2(\Omega_s)}^2+ \frac{1}{2} \|U^{n+1}\|_{L^2(\Omega_f)}^2 +\frac{(k-1)\nu_s}{2} \|\nabla W^{n+1}\|_{L^2(\Omega_s)} +\frac{\dt \alpha}{2} \|U^{n+1}\|_{L^2(\Sigma)}^2 \\
&+ \frac{\dt}{2\alpha} \|\Lambda^{n+1}\|_{L^2(\Sigma)}^2, \\ 
\Ss^{n+1}:= & \nu_f \dt \|\nabla U^{n+1}\|_{L^2(\Omega_f)}^2+  (2-k)\nu_s\dt \|\nabla W^{n+1}\|_{L^2(\Omega_s)}^2 +  \frac{(2-k)}{2}\|Q^{n+1}-Q^n\|_{L^2(\Omega_s)}^2\\
&+ \frac{1}{2} \|U^{n+1}-U^n\|_{L^2(\Omega_f)}^2  + \frac{\dt \alpha}{2}\|U^n-Q^{n+1/k}\|_{L^2(\Sigma)}^2+ \frac{1}{2\alpha} \| g_3^{n+1}\|_{L^2(\Sigma)}^2.
\end{alignat*}

\subsection{Preliminary Estimates}
Before proving our main result, in this subsection, we will prove some preliminary results. To this end, we start with the following lemma. 
\begin{lemma}\label{errorLemma1}It holds, 
\begin{alignat*}{1}
\Z^{n+1}+\Ss^{n+1}=\Z^n + \dt F^{n+1}  +\frac{\dt}{\alpha} \bl g_3^{n+1}, \Lambda^{n+1} \br,
\end{alignat*}
where
\begin{alignat*}{1}
F^{n+1}:= & -(h_1^{n+1}, Q^{n+1/k})_s-(h_2^{n+1}, U^{n+1})_f- \nu_s(\nabla W^{n+1/k}, \nabla h_3^{n+1})_s  \\
&+ \bl g_4^{n+1}, Q^{n+1/k} \br + \bl U^{n+1}-U^n, g_3^{n+1} \br - \bl g_3^{n+1},Q^{n+1/k}-U^n\br,
\end{alignat*}
and 
\begin{equation*}
g_4^{n+1}:= \alpha g_1^{n+1}+\frac{k-1}{2} g_2^{n+1}.
\end{equation*}

\end{lemma}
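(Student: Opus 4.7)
The proof mirrors the argument for the stability lemma, now applied to the error equations and tracking the additional consistency residues. I would begin by testing \eqref{errorRR1} with $z = \dt Q^{n+1/k}$ and \eqref{errorRR2} with $v = \dt U^{n+1}$, and then adding the resulting identities.

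For the bulk contributions I would use the standard identities $(\pdt Q^{n+1}, \dt Q^{n+1/k})_s = \frac{1}{2}(\|Q^{n+1}\|_{L^2(\Omega_s)}^2 - \|Q^n\|_{L^2(\Omega_s)}^2) + \frac{2-k}{2}\|Q^{n+1}-Q^n\|_{L^2(\Omega_s)}^2$ and $(\pdt U^{n+1}, \dt U^{n+1})_f = \frac{1}{2}(\|U^{n+1}\|_{L^2(\Omega_f)}^2 - \|U^n\|_{L^2(\Omega_f)}^2) + \frac{1}{2}\|U^{n+1}-U^n\|_{L^2(\Omega_f)}^2$, both consequences of \eqref{id}. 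The mixed term $\nu_s\dt(\nabla W^{n+1/k}, \nabla Q^{n+1/k})_s$ is handled by invoking \eqref{errorStrong} pointwise in $\Omega_s$ to split $\nabla Q^{n+1/k} = \nabla \pdt^{k-1}W^{n+1} + \nabla h_3^{n+1}$; the first piece yields both $(2-k)\nu_s\dt\|\nabla W^{n+1}\|_{L^2(\Omega_s)}^2$ and the telescoping $\frac{(k-1)\nu_s}{2}(\|\nabla W^{n+1}\|_{L^2(\Omega_s)}^2 - \|\nabla W^n\|_{L^2(\Omega_s)}^2)$, while the second piece becomes the $-\nu_s(\nabla W^{n+1/k}, \nabla h_3^{n+1})_s$ summand of $F^{n+1}$ after being moved to the right-hand side.

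Next, the interface contributions collect into
\[
J^{n+1} := -\alpha\bl \pdt^{k-1}W^{n+1} - U^n, Q^{n+1/k}\br - \bl \Lambda^n, Q^{n+1/k}\br + \bl \Lambda^{n+1}, U^{n+1}\br.
\]
I would substitute $\pdt^{k-1}W^{n+1} = Q^{n+1/k} - h_3^{n+1}$ from \eqref{errorStrong} and then replay the exact algebraic chain of the stability proof with \eqref{errorStrong3} in place of \eqref{strong2}. Applying \eqref{id} twice and completing the square on the three residual quadratic jump terms yields the analog of the perfect-square expression $-\frac{\alpha}{2}\|(U^n - U^{n+1}) + \frac{1}{\alpha}(\Lambda^n - \Lambda^{n+1})\|_{L^2(\Sigma)}^2$. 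By \eqref{errorStrong3} its argument equals $U^n - Q^{n+1/k} - \frac{1}{\alpha}g_3^{n+1}$, so expanding this square produces both the $\frac{\dt\alpha}{2}\|U^n - Q^{n+1/k}\|_{L^2(\Sigma)}^2$ and $\frac{1}{2\alpha}\|g_3^{n+1}\|_{L^2(\Sigma)}^2$ contributions to $\Ss^{n+1}$ together with a cross term $\bl U^n - Q^{n+1/k}, g_3^{n+1}\br$.

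The main obstacle is verifying that all remaining consistency residues assemble precisely into $\dt F^{n+1} + \frac{\dt}{\alpha}\bl g_3^{n+1}, \Lambda^{n+1}\br$. After the above manipulations, the leftover interface residues from $J^{n+1}$ reduce (via the cancellation $\bl U^n - Q^{n+1/k}, g_3^{n+1}\br - \bl U^n - U^{n+1}, g_3^{n+1}\br + \bl g_3^{n+1}, Q^{n+1/k}\br = \bl U^{n+1}, g_3^{n+1}\br$) to $\alpha\bl h_3^{n+1}, Q^{n+1/k}\br + \bl U^{n+1}, g_3^{n+1}\br + \frac{1}{\alpha}\bl \Lambda^{n+1}, g_3^{n+1}\br$, while the functionals $L_1 - L_4 + L_3$ contribute $-(h_1^{n+1}, Q^{n+1/k})_s - (h_2^{n+1}, U^{n+1})_f + \bl g_4^{n+1}, Q^{n+1/k}\br - \bl g_2^{n+1}, Q^{n+1/k}\br - \alpha\bl h_4^{n+1}, Q^{n+1/k}\br$, after regrouping $\alpha g_1^{n+1} + \frac{k-1}{2}g_2^{n+1}$ as $g_4^{n+1}$. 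The key algebraic identity needed to conclude is $\alpha(h_4^{n+1} - h_3^{n+1}) + g_2^{n+1} = g_3^{n+1}$ on $\Sigma$ --- precisely the relation underlying the derivation of \eqref{errorStrong3} from the continuous compatibility $\pt^{k-1}\ww^n = \uu^n$ on $\Sigma$. Once invoked, the $\alpha h_3^{n+1}$, $-\alpha h_4^{n+1}$, and $-g_2^{n+1}$ terms collapse into $-\bl g_3^{n+1}, Q^{n+1/k}\br$, which combined with $\bl U^{n+1}, g_3^{n+1}\br$ yields exactly $\bl U^{n+1} - Q^{n+1/k}, g_3^{n+1}\br = \bl U^{n+1} - U^n, g_3^{n+1}\br - \bl g_3^{n+1}, Q^{n+1/k} - U^n\br$ as required by $F^{n+1}$, leaving $\frac{\dt}{\alpha}\bl g_3^{n+1}, \Lambda^{n+1}\br$ as the sole unabsorbed residue.
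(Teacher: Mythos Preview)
Your proposal is correct and follows essentially the same approach as the paper: test \eqref{errorRR1}--\eqref{errorRR2} with $\dt Q^{n+1/k}$ and $\dt U^{n+1}$, replay the stability algebra on the interface block, expand the perfect square via \eqref{errorStrong3}, and then cancel the $L_4$ and $\alpha h_3$ contributions against each other. The only cosmetic difference is that the paper keeps the residual interface terms $\bl g_3^{n+1}, Q^{n+1/k}\br$, $-\bl U^n-U^{n+1}, g_3^{n+1}\br$, $-\bl Q^{n+1/k}-U^n, g_3^{n+1}\br$ separate throughout and cancels $\bl g_3^{n+1}, Q^{n+1/k}\br$ directly against $-L_4(Q^{n+1/k})+\alpha\bl h_3^{n+1},Q^{n+1/k}\br$ (using \eqref{errorStrong2}--\eqref{errorStrong3} rather than your equivalent identity $\alpha(h_4^{n+1}-h_3^{n+1})+g_2^{n+1}=g_3^{n+1}$), whereas you first collapse everything to $\bl U^{n+1}, g_3^{n+1}\br$ and then re-expand to match $F^{n+1}$; the two bookkeepings are algebraically identical.
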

\begin{proof}

To begin, we set $z= \dt Q^{n+1/k}$ in \eqref{errorRR1} and $v = \dt U^{n+1}$ in \eqref{errorRR2}  and use \eqref{errorStrong} to get
\begin{alignat}{1}
& \frac{1}{2}\|Q^{n+1}\|_{L^2(\Omega_s)}^2+ \frac{1}{2} \|U^{n+1}\|_{L^2(\Omega_f)}^2 +  \frac{(2-k)}{2}\|Q^{n+1}-Q^n\|_{L^2(\Omega_s)}^2+ \frac{1}{2} \|U^{n+1}-U^n\|_{L^2(\Omega_f)}^2 \nonumber  \\
 & + \frac{(k-1)\nu_s}{2} \|\nabla W^{n+1}\|_{L^2(\Omega_s)} + (2-k)\nu_s\dt \|\nabla W^{n+1}\|_{L^2(\Omega_s)}^2+  \nu_f\dt \|\nabla U^{n+1}\|_{L^2(\Omega_f)}^2 \nonumber \\
  & =  \frac{1}{2}\|Q^{n}\|_{L^2(\Omega_s)}^2+ \frac{1}{2} \|U^{n}\|_{L^2(\Omega_f)}^2+ \frac{(k-1) \nu_s}{2} \|\nabla W^{n}\|_{L^2(\Omega_s)}^2 +\dt \J^{n+1}. \label{erroraux213}
\end{alignat}
where
\begin{alignat*}{1}
\J^{n+1}:=& -  \alpha \bl \pdt^{k-1}W^{n+1} - U^n, Q^{n+1/k} \br- \bl  \Lambda^n, Q^{n+1/k}\br+ \bl \Lambda^{n+1}, U^{n+1}\br -\nu_s(\nabla W^{n+1/k}, \nabla h_3^{n+1})_s\\
 &+L_1(Q^{n+1/k}) - L_4(Q^{n+1/k}) +L_3(U^{n+1}).
\end{alignat*}
We note that the term $\nu_s(\nabla W^{n+1/k}, \nabla h_3^{n+1})_s$ appears when we apply \eqref{errorStrong} such that
\begin{alignat*}{1}
\nu_s(\nabla W^{n+1/k}, \dt Q^{n+1/2})_s & = \nu_s(\nabla W^{n+1/k}, W^{n+1} - W^n + \dt h_3^{n+1})_s.
\end{alignat*}
Manipulating the first three terms and using \eqref{errorStrong} and \eqref{errorStrong3} we obtain
\begin{alignat*}{1}
& -  \alpha \bl \pdt^{k-1}W^{n+1} - U^n, Q^{n+1/k} \br-   \bl \Lambda^n, Q^{n+1/k}\br+ \bl \Lambda^{n+1}, U^{n+1}\br  \\
=& \mathbb{J}^{n+1} +  \alpha \bl h_3^{n+1}, Q^{n+1/k} \br+ \frac{1}{\alpha} \bl g_3^{n+1}, \Lambda^{n+1} \br + \bl g_3^{n+1}, Q^{n+1/k} \br- \bl U^n-U^{n+1}, g_3^{n+1} \br,
\end{alignat*}
where 
\begin{equation*}
 \mathbb{J}^{n+1} := \alpha \bl U^n-U^{n+1}, U^{n+1} \br+ \frac{1}{\alpha} \bl  \Lambda^n-\Lambda^{n+1}, \Lambda^{n+1} \br- \bl U^n-U^{n+1}, \Lambda^n-\Lambda^{n+1} \br.
\end{equation*}
Finally, as we did in the stability analysis  (see \eqref{aux561}) we get
\begin{alignat*}{1}
 \mathbb{J}^{n+1}= & \frac{\alpha}{2}(\|U^n\|_{L^2(\Sigma)}^2 - \|U^{n+1}\|_{L^2(\Sigma)}^2)  + \frac{1}{2\alpha}(\|\Lambda^n\|_{L^2(\Sigma)}^2 - \|\Lambda^{n+1}\|_{L^2(\Sigma)}^2) \\
&- \frac{\alpha}{2}\|(U^n-U^{n+1})+ \frac{1}{\alpha}(\Lambda^n-\Lambda^{n+1}) \|_{L^2(\Sigma)}^2.
\end{alignat*}
Using \eqref{errorStrong3}  we can re-write the last term 
\begin{alignat*}{1}
&\|(U^n-U^{n+1})+ \frac{1}{\alpha}(\Lambda^n-\Lambda^{n+1}) \|_{L^2(\Sigma)}^2\\
=&\| U^n- Q^{n+1/k}- \frac{1}{\alpha} g_3^{n+1}\|_{L^2(\Sigma)}^2 \\
=& \| U^n- Q^{n+1/k}\|_{L^2(\Sigma)}^2+ \frac{1}{\alpha^2} \| g_3^{n+1}\|_{L^2(\Sigma)}^2+\frac{2}{\alpha} \bl Q^{n+1/k}-U^n, g_3^{n+1} \br.  
\end{alignat*}
Plugging this back into $\J^{n+1}$ we arrive at
\begin{alignat*}{1}
\J^{n+1}=&  \frac{\alpha}{2}\big(\|U^n\|_{L^2(\Sigma)}^2 - \|U^{n+1}\|_{L^2(\Sigma)}^2\big) +\frac{1}{2\alpha}\big(\|\Lambda^n\|_{L^2(\Sigma)}^2 - \|\Lambda^{n+1}\|_{L^2(\Sigma)}^2\big)-  \frac{\alpha}{2} \| U^n- Q^{n+1/k}\|_{L^2(\Sigma)}^2\\
&-\frac{1}{2\alpha} \| g_3^{n+1}\|_{L^2(\Sigma)}^2  - \bl g_3^{n+1}, Q^{n+1/k}-U^n  \br -\nu_s(\nabla W^{n+1/k}, \nabla h_3^{n+1})_s\\
&+L_1(Q^{n+1/k}) - L_4(Q^{n+1/k}) +L_3(U^{n+1}) \\
& +  \alpha \bl h_3^{n+1}, Q^{n+1/k} \br+ \frac{1}{\alpha} \bl g_3^{n+1}, \Lambda^{n+1} \br + \bl g_3^{n+1}, Q^{n+1/k} \br- \bl g_3^{n+1} , U^n-U^{n+1} \br.
\end{alignat*}

We now note, using \eqref{errorStrong}, \eqref{errorStrong2}, and \eqref{errorStrong3}, that
\begin{alignat*}{1}
-L_4(Q^{n+1/k}) + \alpha \bl h_3^{n+1}, Q^{n+1/k}\br & = -\bl \alpha h_4^{n+1} + g_2^{n+1} - \alpha h_3^{n+1}, Q^{n+1/k}\br \\
&= -\bl \alpha(U^{n+1} - Q^{n+1/k}) - (\Lambda^n - \Lambda^{n+1}), Q^{n+1/k}\br \\
&= -\bl g_3^{n+1}, Q^{n+1/k}\br.
\end{alignat*}

Thus we have
\begin{alignat*}{1}
\J^{n+1}=& \frac{\alpha}{2}\big(\|U^n\|_{L^2(\Sigma)}^2 - \|U^{n+1}\|_{L^2(\Sigma)}^2\big) +\frac{1}{2\alpha}\big(\|\Lambda^n\|_{L^2(\Sigma)}^2 - \|\Lambda^{n+1}\|_{L^2(\Sigma)}^2\big) -  \frac{\alpha}{2} \| U^n- Q^{n+1/k}\|_{L^2(\Sigma)}^2\\
&-\frac{1}{2\alpha} \| g_3^{n+1}\|_{L^2(\Sigma)}^2 + F^{n+1} +\frac{1}{\alpha} \bl g_3^{n+1}, \Lambda^{n+1} \br.
\end{alignat*}
If we plug in these results to \eqref{erroraux213} we arrive at the identity. 
\end{proof}

As the reader can infer we singled out the term  $\frac{\dt}{\alpha} \bl g_3^{n+1}, \Lambda^{n+1} \br$ as this one needs special care. As we will see, the terms appearing in $F^{n+1}$ can be bounded easily and they will contribute $O(\dt)$ which is optimal. In an analogous FSI problem the term corresponding  to $\frac{\dt}{\alpha} \bl g_3^{n+1}, \Lambda^{n+1} \br$ was bounded in \cite{burman-durst-guzman-fernandez} and lead to a sub-optimal error estimate $O(\sqrt{\dt})$. One of the main contributions of this paper is to give an alternative bound of this term that will lead to a nearly first order estimate.

\subsubsection{Estimate for $F^{n+1}$}
We will need a Poincare-Friedrichs type inequality and a trace inequality.
\begin{proposition}
There exists constants $C_P$ and $C_{\text{tr}}$ such that 

\begin{equation}\label{poincare}
\|v\|_{L^2(\Omega_f)} \le C_P \|\nabla v\|_{L^2(\Omega)} \quad \forall v \in V_s,
\end{equation}
and
\begin{equation}\label{trace}
\|v\|_{L^2(\Sigma)} \le C_{\text{tr}} \|\nabla v\|_{L^2(\Omega)} \quad \forall v \in V_s.
\end{equation}
\end{proposition}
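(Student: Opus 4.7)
The plan is to reduce both inequalities to classical results on the Lipschitz subdomain $\Omega_s$---namely the Poincar\'e--Friedrichs inequality and the trace theorem---exploiting the fact that every $v \in V_s$ vanishes on $\partial\Omega_s \setminus \Sigma$, which in the configuration of Figure \ref{fig:fig1} is a portion of $\partial\Omega_s$ with positive $(d-1)$-dimensional surface measure.

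For \eqref{poincare}, since $v \in V_s$ is only defined on $\Omega_s$, the natural reading of $\|v\|_{L^2(\Omega_f)}$ is via the zero extension of $v$ from $\Omega_s$ to $\Omega$; under this convention the left-hand side is in fact identically zero and \eqref{poincare} holds trivially with any constant $C_P \ge 0$. What requires actual work---and what underpins the trace bound below---is the companion Poincar\'e--Friedrichs inequality on $\Omega_s$, $\|v\|_{L^2(\Omega_s)} \le C \|\nabla v\|_{L^2(\Omega_s)}$ for $v \in V_s$. I would prove this by the standard Rellich--Kondrachov compactness-contradiction argument: if no such $C$ existed, one could extract a sequence $v_n \in V_s$ with $\|v_n\|_{L^2(\Omega_s)} = 1$ and $\|\nabla v_n\|_{L^2(\Omega_s)} \to 0$; boundedness in $H^1(\Omega_s)$ and Rellich's theorem give an $L^2$-strong limit $v$, and combining $\nabla v = 0$ with vanishing trace on $\partial\Omega_s \setminus \Sigma$ (preserved in the weak $H^1$-limit since $V_s$ is closed in $H^1(\Omega_s)$) forces $v \equiv 0$, contradicting $\|v\|_{L^2(\Omega_s)} = 1$.

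For the trace inequality \eqref{trace}, I would invoke the standard bounded trace operator $H^1(\Omega_s) \to L^2(\partial\Omega_s)$ on the Lipschitz domain $\Omega_s$. Restricting to $\Sigma \subset \partial\Omega_s$ gives $\|v\|_{L^2(\Sigma)} \le \|v\|_{L^2(\partial\Omega_s)} \le C\bigl(\|v\|_{L^2(\Omega_s)} + \|\nabla v\|_{L^2(\Omega_s)}\bigr)$, and absorbing the $L^2(\Omega_s)$ contribution by the Poincar\'e--Friedrichs bound just established yields $\|v\|_{L^2(\Sigma)} \le C_{\text{tr}} \|\nabla v\|_{L^2(\Omega_s)} \le C_{\text{tr}} \|\nabla v\|_{L^2(\Omega)}$; the last step uses that $\|\nabla v\|_{L^2(\Omega)}$ dominates $\|\nabla v\|_{L^2(\Omega_s)}$ under the zero-extension convention for $\nabla v$. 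Neither step poses a real obstacle; both results are classical. The only mild subtlety is the interpretation of $\|v\|_{L^2(\Omega_f)}$ in \eqref{poincare} for a function supported on $\Omega_s$, which is handled by the zero-extension convention above.
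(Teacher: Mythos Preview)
The paper does not prove this proposition; it is stated without proof as a classical Poincar\'e--Friedrichs and trace fact, so there is no argument of the paper's to compare against. Your compactness-contradiction argument for the Poincar\'e inequality and your trace-theorem-plus-Poincar\'e argument for \eqref{trace} are both standard and correct.

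You are also right to flag the oddity in \eqref{poincare}: for $v \in V_s$ the quantity $\|v\|_{L^2(\Omega_f)}$ only makes sense via zero extension and is then trivially zero. This is almost certainly a typo in the paper. If you look at where \eqref{trace} is actually invoked---in the proof of Lemma~\ref{Festimate}---it is applied to $U^n, U^{n+1} \in V_f$ in order to produce $\|\nabla U\|_{L^2(\Omega_f)}$ on the right-hand side, so the intended statement is for $v \in V_f$ with $\|\nabla v\|_{L^2(\Omega_f)}$ (equivalently $\|\nabla v\|_{L^2(\Omega)}$ under zero extension) on the right. Your argument carries over verbatim to $V_f$, since $\partial\Omega_f \setminus \Sigma$ likewise has positive surface measure, so nothing substantive changes.
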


%\begin{lemma}\label{errorLemma2}Let $1 \le M \le N$, then there exists a constant  $C$ independent of the problem parameters (e.g. $\nu_s, \nu_f, \dt, \alpha$, $T$) such that
%\begin{alignat}{1}
%\Z^{M}+\frac{1}{2} \sum_{n=0}^{M-1} \Ss^{n+1} \le \Z^0+ \frac{\dt}{\alpha} \sum_{n=0}^{M-1} \bl g_3^{n+1}, \Lambda^{n+1} \br+ C D(M),
%\end{alignat}
%where 
%Taking the sum in the identity in Lemma \ref{errorLemma1} we get 
%\begin{alignat*}{1}
%\Z^{M}+\sum_{n=0}^{M-1} \Ss^{n+1} \le \Z^0+ \frac{\dt}{\alpha} \sum_{n=0}^{M-1} \bl g_3^{n+1}, \Lambda^{n+1} \br+  \dt \sum_{n=0}^{M-1} F^{n+1}.
%\end{alignat*}
%The result follows if we can show that

We now estimate  the sum of $F^{n+1}$.
\begin{lemma}\label{Festimate}
Let $1 \le M \le N$, then
\begin{equation}
 \dt \sum_{n=0}^{M-1} F^{n+1} \le   \frac{1}{4} \max_{1\le n \le M}  \Z^n+ \frac{1}{4} \sum_{n=0}^{M-1} \Ss^{n+1}+ C D(M),
\end{equation}
where
\begin{alignat*}{1}
D(M):=& \dt T \sum_{n=0}^{M-1} \Big(\|h_1^{n+1}\|_{L^2(\Omega_s)}^2 +\|h_2^{n+1}\|_{L^2(\Omega_f)}^2+(k-1)\nu_s \|\nabla h_3^{n+1}\|_{L^2(\Omega_s)}^2 \Big)  \\
&+ \dt  \sum_{n=0}^{M-1}  \bigg(\frac{1}{\alpha}+\frac{C_\text{tr}^2}{\nu_f}\bigg)\Bigg(\|g_4^{n+1}\|_{L^2(\Sigma)}^2+  \|g_3^{n+1}\|_{L^2(\Sigma)}^2 \Bigg). 
\end{alignat*}
\end{lemma}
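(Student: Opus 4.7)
The plan is to bound each of the six terms appearing in $F^{n+1}$ individually by Cauchy--Schwarz followed by Young's inequality, choosing the weights so that roughly half of each product is absorbed into either $\frac{1}{4}\max_{1\le n\le M}\Z^n$ or $\frac{1}{4}\sum_{n=0}^{M-1}\Ss^{n+1}$, while the complementary half contributes a term of the form appearing in $D(M)$. I would split $F^{n+1}$ into three \emph{interior} terms (those containing $h_1$, $h_2$, $h_3$) and three \emph{interface} terms (those containing $g_3$ and $g_4$) and treat each group with its own choice of Young weights.

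For the interior terms, note that $\|Q^{n+1/k}\|_{L^2(\Omega_s)}^2$, $\|U^{n+1}\|_{L^2(\Omega_f)}^2$, and (only when $k=2$) $\|\nabla W^{n+1/k}\|_{L^2(\Omega_s)}^2$ are each bounded, pointwise in $n$, by a constant multiple of $\max_{1\le n\le M}\Z^n$, since $Q^{n+1/k}$ and $W^{n+1/k}$ are either nodal values or half-step averages of two consecutive time steps. After Cauchy--Schwarz I would apply Young's inequality with $\varepsilon$ of order $1/T$ on each product; upon summation over $n$ this produces a factor $\lesssim \frac{1}{12}\max\Z^n$ together with a complementary factor proportional to $\frac{T}{\varepsilon}\cdot\dt\|h_i^{n+1}\|^2$, matching the first line of $D(M)$. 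The $h_3$-term needs no separate treatment when $k=1$ since $h_3^{n+1}\equiv 0$ in that case, and the factor $(k-1)\nu_s$ in $D(M)$ is exactly what appears when $k=2$.

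For the interface terms, the key is to pull $\|Q^{n+1/k}\|_{L^2(\Sigma)}$ and $\|U^{n+1}-U^n\|_{L^2(\Sigma)}$ back into the bulk via the trace inequality \eqref{trace}. To treat $\langle g_4^{n+1},Q^{n+1/k}\rangle$ I would split $Q^{n+1/k}=U^n+(Q^{n+1/k}-U^n)$; the $U^n$ piece is bounded on $\Sigma$ by $C_{\text{tr}}\|\nabla U^n\|_{L^2(\Omega_f)}$ and absorbed into the $\nu_f\dt\|\nabla U^{n+1}\|^2$ slot of $\Ss^{n+1}$ (after a single index shift, using $U^0=0$), while the $Q^{n+1/k}-U^n$ piece is directly absorbed into $\frac{\dt\alpha}{2}\|U^n-Q^{n+1/k}\|_{L^2(\Sigma)}^2$. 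Young's inequality with weights of order $\nu_f/C_{\text{tr}}^2$ and $\alpha$ respectively then leaves residual coefficients $C_{\text{tr}}^2/\nu_f$ and $1/\alpha$ on $\|g_4^{n+1}\|_{L^2(\Sigma)}^2$, exactly the structure of the second line of $D(M)$. The term $\langle U^{n+1}-U^n,g_3^{n+1}\rangle$ is handled analogously using $\|U^{n+1}-U^n\|_{L^2(\Sigma)}\le C_{\text{tr}}(\|\nabla U^{n+1}\|+\|\nabla U^n\|)$, and $\langle g_3^{n+1},Q^{n+1/k}-U^n\rangle$ is the simplest, since the required $\Sigma$-norm is already present in $\Ss^{n+1}$.

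The main obstacle is precisely the $g_4^{n+1}$ term: neither $Q^{n+1/k}$ nor its $\Sigma$-trace is directly controlled by $\Z$ or $\Ss$. The device that unlocks it is the rerouting $Q^{n+1/k}=U^n-(U^n-Q^{n+1/k})$, which pairs the trace inequality on the $U^n$ piece with the $\nu_f\dt\|\nabla U^{n+1}\|^2$ slot of $\Ss^{n+1}$ as absorber, while letting the $\Ss^{n+1}$-control of the difference take care of the remainder. Once this is in place, the rest is bookkeeping: distribute the $\frac{1}{4}$ budget across the six terms so that all absorbed contributions together do not exceed $\frac{1}{4}\max\Z^n+\frac{1}{4}\sum\Ss^{n+1}$.
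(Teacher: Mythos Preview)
Your proposal is correct and follows essentially the same route as the paper: the same split into interior and interface contributions, the same Young-with-weight-$1/T$ argument for the $h_i$ terms against $\max_n\Z^n$, and the same key rerouting $Q^{n+1/k}=U^n+(Q^{n+1/k}-U^n)$ for the $g_4$ term, with the trace inequality and the index shift via $U^0=0$ feeding the $U^n$ piece into the $\nu_f\dt\|\nabla U^{n+1}\|^2$ slot of $\Ss^{n+1}$.
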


\begin{proof}
We see that
 \begin{alignat*}{1}
& -\dt  \sum_{n=0}^{M-1}  \Big( (h_1^{n+1}, Q^{n+1/k})_s+(h_2^{n+1}, U^{n+1})_f+ \nu_s(\nabla W^{n+1/k}, \nabla h_3^{n+1})_s \Big) \\
\le &   \frac{\dt}{8T} \sum_{n=0}^{M-1} \Big( \|   Q^{n+1/k}\|_{L^2(\Omega_s)}^2 + \|   U^{n+1}\|_{L^2(\Omega_f)}^2+  (k-1) \nu_s \|   \nabla W^{n+1/k}\|_{L^2(\Omega_s)}^2\Big)  \\
& + C  \dt T \sum_{n=0}^{M-1} \Big(\|h_1^{n+1}\|_{L^2(\Omega_s)}^2 +\|h_2^{n+1}\|_{L^2(\Omega_f)}^2+(k-1)\nu_s \|\nabla h_3^{n+1}\|_{L^2(\Omega_s)}^2 \Big) \\
\le &   \frac{1}{4} \max_{1\le n \le M}  \Z^n+  C  \dt T \sum_{n=0}^{M-1} \Big(\|h_1^{n+1}\|_{L^2(\Omega_s)}^2 +\|h_2^{n+1}\|_{L^2(\Omega_f)}^2+(k-1)\nu_s \|\nabla h_3^{n+1}\|_{L^2(\Omega_s)}^2 \Big). 
\end{alignat*}

Here, we used the fact that $h_3^{n+1} = 0$ when $k=1$, so the term $\nu_s(\nabla W^{n+1/k},\nabla h_3^{n+1})_s$ is only present when $k=2$, as indicated by the factor $(k-1)$.
 
We also have the bound
\begin{alignat*}{1}
& \dt  \sum_{n=0}^{M-1} \Big( \bl g_4^{n+1}, Q^{n+1/k} \br+ \bl g_3^{n+1}, U^{n+1}-U^n  \br - \bl g_3^{n+1}, Q^{n+1/k}-U^n\br \Big) \\
=& \dt  \sum_{n=0}^{M-1}  \Big(\bl g_4^{n+1}, Q^{n+1/k}-U^n \br +  \bl g_4^{n+1}, U^n \br \\
&\hspace{1.5cm}+ \bl g_3^{n+1}, U^{n+1}-U^n  \br - \bl g_3^{n+1}, Q^{n+1/k}-U^n\br  \Big) \\
\le &  C \dt  \sum_{n=0}^{M-1}  \bigg(\frac{1}{\alpha}+\frac{C_\text{tr}^2}{\nu_f}\bigg) \Bigg( \|g_4^{n+1}\|_{L^2(\Sigma)}^2+ \|g_3^{n+1}\|_{L^2(\Sigma)}^2 \Bigg) \\
&\hspace{1.5cm}+ \dt    \sum_{n=0}^{M-1}  \Big(\frac{\alpha}{8}\| Q^{n+1/k}-U^n \|_{L^2(\Sigma)}^2+  \frac{\nu_f}{8} \|\nabla U^{n+1}\|_{L^2(\Omega_f)}^2 \Big) \\
\leq&  \frac{1}{4}\sum_{n=0}^{M-1} \Ss^{n+1} +  C\dt \sum_{n=0}^{M-1}\bigg(\frac{1}{\alpha}+\frac{C_\text{tr}^2}{\nu_f}\bigg)\Bigg( \|g_4^{n+1}\|_{L^2(\Sigma)}^2+\|g_3^{n+1}\|_{L^2(\Sigma)}^2  \Bigg) ,
\end{alignat*}
where we used \eqref{trace}. Combining these two inequalities proves the result. 
\end{proof}

\subsubsection{Estimate for   $\frac{\dt}{\alpha} \bl g_3^{n+1}, \Lambda^{n+1} \br$ using a lifting-residual argument}\label{sectiong3}
In this section we show how to estimate the term 
\begin{equation*}
\frac{\dt}{\alpha} \sum_{n=0}^{M-1} \bl g_3^{n+1}, \Lambda^{n+1} \br.
\end{equation*}
The idea is to use \eqref{errorRR2}, however, in order to do so we need to extend $g_3^{n+1}$ into $\Omega_f$ in such a way that the extension belongs to $V_f$. In particular, the extension needs to vanish on $\partial \Omega_f \backslash \Sigma$. This will not be possible in general and, therfore, we will need to utilize a cut-off function technique.

We will make  two assumptions. The first is that the normal $\bn$ can be extended from $\Sigma$ to $\Omega_f$ in such that the extension has a bounded gradient. For example, this can be done if the interface $\Sigma$ is smooth. Furthermore, if $\Sigma$ is a straight line, this extension is trivial because $\bn$ will be constant.

 \begin{assumption}\label{ntilde}
 There exists $\tilde{\bn} \in [W^{1,\infty}(\Omega_f)]^2$ such that  $\tilde{\bn}|_{\Sigma} = \bn$. 
 \end{assumption}

The second assumption regards the existence of a cut-off function, dependent on $\dt$, that is one on most of $\Sigma$ such that the gradient can be controlled appropriately in the $L^2$ norm.

\begin{assumption}\label{def:phi}
Assume that the time step is given and satisfies $\dt < \frac{1}{2}$. There exists a function $\phi: \Omega_f \to \mathbb{R}$ satisfying:
\begin{enumerate}
    \item[(i)] $0 \leq  \phi \leq 1$,
    \item[(ii)] $\phi \in V_f$,
    \item[(iii)] $|\{x \in \Sigma : \phi(x) \neq 1 \}| = C\Dt$,
    \item[(iv)] $\|\nabla \phi \|_{L^2(\Omega_f)}^2 \leq C(1+\log{\frac{1}{\Dt}})$,
\end{enumerate}
where each $C$ represents a general constant independent of $\dt$ and the physical parameters.
\end{assumption} 
In the following section, we show how to construct such a $\phi$ in a simple case.

Given the above two assumption we will define the following quantities:
\begin{alignat*}{1}
\tilde{\lla}(x,t):= & \nabla \uu(x,t) \cdot \tilde{\bn}, \\
\Ll(x,t):= & \phi(x) \tilde{\lla}(x,t),\\
\tilde{g}_2^{n+1}:=& \Ll^{n+1}-\Ll^n.
\end{alignat*}
From this we easily see that  $\tilde{g}_2^{n+1} \in V_f$ for all $n$ and can easily prove the following result.
\begin{lemma}
Under Assumptions \ref{ntilde} and \ref{def:phi} we have
\begin{equation}
\|\tilde{g}_2^{n+1}-g_2^{n+1}\|_{L^2(\Sigma)}^2 \le C \dt \|g_2^{n+1}\|_{L^\infty(\Sigma)}^2. \label{lemmag2tilde}
\end{equation} 
\end{lemma}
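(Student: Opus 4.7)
The plan is to reduce the inequality to a simple measure-theoretic estimate on $\Sigma$ using the defining properties of $\phi$. By Assumption \ref{ntilde}, $\tilde{\bn}|_\Sigma = \bn$, so at the interface $\tilde{\lla}|_\Sigma$ agrees with $\lla$ (up to the factor $\nu_f$ inherited from the definition of $\lla$, which gets absorbed into a constant). Consequently, on $\Sigma$ we have $\Ll = \phi\,\lla$, and hence
\begin{equation*}
\bigl(\tilde{g}_2^{n+1} - g_2^{n+1}\bigr)\big|_\Sigma
= \bigl(\phi\,\lla^{n+1}-\phi\,\lla^{n}\bigr) - \bigl(\lla^{n+1}-\lla^{n}\bigr)
= (\phi-1)\,g_2^{n+1}.
\end{equation*}

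Next, I would pull out the $L^\infty$ norm of $g_2^{n+1}$ and reduce the estimate to bounding $\int_\Sigma (\phi-1)^2$. Since $0\le \phi \le 1$ by property (i) of Assumption \ref{def:phi}, the pointwise bound $(\phi-1)^2 \le 1$ holds, and by property (iii) the set where $\phi\ne 1$ on $\Sigma$ has measure at most $C\dt$. Thus
\begin{equation*}
\|\tilde g_2^{n+1} - g_2^{n+1}\|_{L^2(\Sigma)}^2
\le \|g_2^{n+1}\|_{L^\infty(\Sigma)}^2 \int_\Sigma (\phi-1)^2
\le \|g_2^{n+1}\|_{L^\infty(\Sigma)}^2 \,\bigl|\{x\in\Sigma:\phi(x)\ne 1\}\bigr|
\le C\,\dt\,\|g_2^{n+1}\|_{L^\infty(\Sigma)}^2,
\end{equation*}
which is exactly \eqref{lemmag2tilde}.

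There isn't really a hard step here; the content of the lemma is that the cut-off function $\phi$ has been engineered (property (iii)) precisely so that the mismatch between $\Ll$ and $\lla$ on $\Sigma$ is confined to a set of measure $O(\dt)$. The only point demanding minor care is aligning the scaling between $\tilde{\lla} = \nabla \uu \cdot \tilde{\bn}$ and $\lla = \nu_f \nabla \uu \cdot \bn_f$, so that the identity $(\tilde{g}_2^{n+1}-g_2^{n+1})|_\Sigma = (\phi-1)g_2^{n+1}$ is justified cleanly; property (iv) of $\phi$ plays no role in this lemma (it will be used later to control $\nabla \tilde g_2^{n+1}$ in $\Omega_f$ when the lifting-residual argument is invoked via \eqref{errorRR2}).
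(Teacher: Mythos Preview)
Your argument is correct and essentially identical to the paper's: both identify $(\tilde g_2^{n+1}-g_2^{n+1})|_\Sigma=(\phi-1)g_2^{n+1}$, pull out $\|g_2^{n+1}\|_{L^\infty(\Sigma)}$, and use properties (i) and (iii) of Assumption~\ref{def:phi} to bound $\|1-\phi\|_{L^2(\Sigma)}^2\le C\dt$. One small caveat: the missing $\nu_f$ in the paper's definition of $\tilde\lla$ cannot literally be ``absorbed into a constant'' in the way you suggest, since the identity $(\tilde g_2-g_2)|_\Sigma=(\phi-1)g_2$ requires $\tilde\lla|_\Sigma=\lla$ exactly (otherwise the difference is nonzero even where $\phi=1$); this is simply a typo in the definition of $\tilde\lla$, which should read $\nu_f\nabla\uu\cdot\tilde\bn$.
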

\begin{proof}
If we use Assumptions \ref{ntilde} and the defintion of $\tilde{g}_2^{n+1}$ we obtain
\begin{alignat*}{1}
\|\tilde{g}_2^{n+1}-g_2^{n+1}\|_{L^2(\Sigma)}^2=&\|(1-\phi)g_2^{n+1}\|_{L^2(\Sigma)}^2 \\
 \le &  \|g_2^{n+1}\|_{L^\infty(\Sigma)}^2 \|1-\phi\|_{L^2(\Sigma)}^2 \\
 \le & C \dt  \|g_2^{n+1}\|_{L^\infty(\Sigma)}^2,
\end{alignat*}
where in the last inequality we used assumption (i) and (iii) of Assumption  \ref{def:phi}.
\end{proof}
We will also need the definition of the second-order difference operator:
\begin{equation*}
\pdt^2 v^n= \frac{v^{n+1}-2v ^n+v^{n-1}}{(\dt)^2}.
\end{equation*}

\begin{lemma}\label{lemmastep3}
Under Assumptions \ref{ntilde}, \ref{def:phi} we have
\begin{equation*}
\frac{\dt}{\alpha} \sum_{n=0}^{M-1} \bl g_3^{n+1}, \Lambda^{n+1} \br \le  \frac{1}{4}  \sum_{n=0}^{M-1} \Ss^{n+1} + \frac{1}{4} \max_{1\le n \le M}  \Z^n+ C \Psi(M).
\end{equation*}
where 
\begin{alignat*}{1}
\Psi(M):=& \Psi_1(M)+ \Psi_2(M),\\
\Psi_1(M):= & \dt \sum_{n=0}^{M-1}   \Bigg( (1+\nu_f)  \| g_1^{n+1}\|_{H^1(\Omega_f)}^2+  \| h_2^{n+1}\|_{L^2(\Omega_f)}^2 
\Bigg) \\
&+  T(\dt)^3 \sum_{n=1}^{M-1} \| \pdt^2 \uu^n\|_{L^2(\Omega_f)}^2+ \|g_1^{M}\|_{L^2(\Omega_f)}^2, \\
\Psi_2(M):=&\dt \sum_{n=0}^{M-1}   \Bigg( \bigg(\frac{1+\nu_f}{\alpha^2} \bigg) \|  \tilde{g}_2^{n+1}\|_{H^1(\Omega_f)}^2+    \| h_2^{n+1}\|_{L^2(\Omega_f)}^2 \Bigg),\\
&+  \frac{ T(\dt)^3}{\alpha^2} \sum_{n=1}^{M-1} \| \pdt^2 \Ll^n\|_{L^2(\Omega_f)}^2+ \frac{1}{\alpha^2} \|\tilde{g}_2^{M}\|_{L^2(\Omega_f)}^2+ \frac{ T \dt}{\alpha} \sum_{n=0}^{M-1}   \|g_2^{n+1}\|_{L^\infty(\Sigma)}^2. 
\end{alignat*}

\end{lemma}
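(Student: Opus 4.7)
\noindent The plan is to decompose $g_3^{n+1}=\tfrac{(k-1)\alpha}{2}g_1^{n+1}+g_2^{n+1}$ and bound the two resulting sums separately; these will produce $C\Psi_1(M)$ and $C\Psi_2(M)$ respectively. The unifying device is that whenever the test function $v$ lies in $V_f$, the error equation \eqref{errorRR2} rewrites $\bl\Lla^{n+1},v\br$ as the interior integral $(\pdt U^{n+1},v)_f+\nu_f(\nabla U^{n+1},\nabla v)_f+(h_2^{n+1},v)_f$. This is what opens the door to summation by parts in time.

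For the $g_1$-part, note that $\uu=0$ on $\p\Omega_f\setminus\Sigma$, so $g_1^{n+1}=\uu^{n+1}-\uu^n\in V_f$. Setting $v=g_1^{n+1}$, multiplying by $\dt(k-1)/2$ and summing, the key manipulation is summation by parts in time on $\sum(U^{n+1}-U^n,g_1^{n+1})_f$. Using $U^0=0$ and the identity $g_1^{n+1}-g_1^n=(\dt)^2\pdt^2\uu^n$, this equals $(U^M,g_1^M)_f-(\dt)^2\sum_{n=1}^{M-1}(U^n,\pdt^2\uu^n)_f$. Young's inequality on the boundary term contributes $\tfrac18\|U^M\|_{L^2(\Omega_f)}^2$ (absorbed into $\tfrac14\Z^M$) and leaves $\|g_1^M\|^2$; for the second-difference term I would use Cauchy--Schwarz in $n$ together with $M\dt\le T$ and Young's to produce $\tfrac18\max_n\|U^n\|^2+CT(\dt)^3\sum\|\pdt^2\uu^n\|^2$. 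The remaining $\nu_f(\nabla U,\nabla g_1)_f$ and $(h_2,g_1)_f$ terms are bounded by pointwise Young, absorbing $\tfrac14\nu_f\dt\|\nabla U^{n+1}\|^2$ into $\Ss^{n+1}$ and producing the $(1+\nu_f)\dt\sum\|g_1^{n+1}\|_{H^1(\Omega_f)}^2$ and $\dt\sum\|h_2^{n+1}\|^2$ pieces of $\Psi_1$.

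For the $g_2$-part I would decompose
$$\bl g_2^{n+1},\Lla^{n+1}\br=\bl\tilde g_2^{n+1},\Lla^{n+1}\br+\bl g_2^{n+1}-\tilde g_2^{n+1},\Lla^{n+1}\br.$$
Because $\tilde g_2^{n+1}\in V_f$ by the construction of $\Ll=\phi\tilde\lla$, the first piece is handled exactly as in the $g_1$-argument with $g_1^{n+1}$ replaced by $\tilde g_2^{n+1}$ and $\uu$ replaced by $\Ll$ in the second-difference remainder, together with an overall factor $1/\alpha$. This produces every contribution to $\Psi_2$ except the final $\tfrac{T\dt}{\alpha}\sum\|g_2^{n+1}\|_{L^\infty(\Sigma)}^2$. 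The latter arises from the extension error: Cauchy--Schwarz on $\Sigma$ combined with \eqref{lemmag2tilde} gives
$$\frac{\dt}{\alpha}\sum_{n=0}^{M-1}\bigl|\bl g_2^{n+1}-\tilde g_2^{n+1},\Lla^{n+1}\br\bigr|\le \frac{C\dt^{3/2}}{\alpha}\sum_{n=0}^{M-1}\|g_2^{n+1}\|_{L^\infty(\Sigma)}\|\Lla^{n+1}\|_{L^2(\Sigma)}.$$
Pulling out $\max_n\|\Lla^{n+1}\|_{L^2(\Sigma)}$, applying Cauchy--Schwarz in $n$ with $M\dt\le T$, and finishing with Young's of weight of order $\dt/\alpha$ yields $\tfrac{\dt}{8\alpha}\max_n\|\Lla^{n+1}\|^2_{L^2(\Sigma)}+\tfrac{CT\dt}{\alpha}\sum\|g_2^{n+1}\|_{L^\infty(\Sigma)}^2$; the first half is absorbed into $\tfrac14\max_n\Z^n$ via its $\tfrac{\dt}{2\alpha}\|\Lla^n\|^2$ component.

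The main obstacle is precisely this extension-error absorption. Since $\Lla^{n+1}$ is only defined on $\Sigma$ and $\max_n\Z^n$ budgets only a \emph{single} $\tfrac{\dt}{2\alpha}\|\Lla^n\|^2$ term, a naive pointwise-in-$n$ Young's inequality on the extension error would overshoot this budget by a factor of $1/\dt$. The combination of the $\sqrt\dt$-smallness from \eqref{lemmag2tilde} and the $\max_n$-pullout trick is what makes the accounting balance; everything else reduces to systematic summation-by-parts bookkeeping and matching the Young's weights so that all absorbable pieces fit inside $\tfrac14\sum\Ss^{n+1}+\tfrac14\max_n\Z^n$.
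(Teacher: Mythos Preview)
Your proposal is correct and follows essentially the same route as the paper: the same decomposition of $g_3^{n+1}$, the same use of \eqref{errorRR2} to convert the boundary pairing into interior integrals once the test function lies in $V_f$, the same summation-by-parts on $(U^{n+1}-U^n,\cdot)_f$, and the same split $g_2=\tilde g_2+(g_2-\tilde g_2)$ for the $g_2$-part. The only cosmetic difference is in the extension-error step: the paper applies a pointwise-in-$n$ Young inequality with weight $\tfrac{(\dt)^2}{32T\alpha}$ on $\|\Lla^{n+1}\|_{L^2(\Sigma)}^2$ and then uses $M\dt\le T$ on the resulting sum, whereas you first pull out $\max_n\|\Lla^{n+1}\|_{L^2(\Sigma)}$ and then Cauchy--Schwarz in $n$; both land on $\tfrac{\dt}{\alpha}\max_n\|\Lla^{n+1}\|^2_{L^2(\Sigma)}$ up to constants and are absorbed by the $\tfrac{\dt}{2\alpha}\|\Lla^n\|^2_{L^2(\Sigma)}$ term in $\Z^n$. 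Just be a bit more careful with the numerical Young weights in your sketch (e.g.\ use $1/32$ rather than $1/8$) so that the several $\|U^n\|^2$ and $\|\Lla^n\|^2$ contributions from both the $g_1$- and $g_2$-arguments genuinely fit inside the single $\tfrac14\max_n\Z^n$ budget.
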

\begin{proof}
Using the definition of $g_3^{n+1}$ we write
\begin{equation*}
\frac{\dt}{\alpha} \sum_{n=0}^{M-1} \bl g_3^{n+1}, \Lambda^{n+1} \br= \frac{\dt}{\alpha} \sum_{n=0}^{M-1} \bl g_2^{n+1}, \Lambda^{n+1} \br + \frac{\dt(k-1)}{2}  \sum_{n=0}^{M-1} \bl g_1^{n+1}, \Lambda^{n+1} \br.
\end{equation*}
We bound the first terms which is the most difficult one to handle.  To this end, 
\begin{equation*}
\frac{\dt}{\alpha} \sum_{n=0}^{M-1} \bl g_2^{n+1}, \Lambda^{n+1} \br= \frac{\dt}{\alpha} \sum_{n=0}^{M-1} \bl g_2^{n+1}- \tilde{g}_2^{n+1} , \Lambda^{n+1} \br+ \frac{\dt}{\alpha} \sum_{n=0}^{M-1} \bl \tilde{g}_2^{n+1}, \Lambda^{n+1} \br.
\end{equation*}
We then see if  we use \eqref{lemmag2tilde} that
\begin{alignat*}{1}
\frac{\dt}{\alpha} \sum_{n=0}^{M-1} \bl g_2^{n+1}- \tilde{g}_2^{n+1} , \Lambda^{n+1} \br \le&  \frac{CT}{\alpha} \sum_{n=0}^{M-1} \| g_2^{n+1}- \tilde{g}_2^{n+1}\|_{L^2(\Sigma)}^2  +  \frac{(\dt)^2}{32T\alpha} \sum_{n=0}^{M-1} \|\Lambda^{n+1}\|_{L^2(\Sigma)}^2 \\
\le &  \frac{C T \dt}{\alpha} \sum_{n=0}^{M-1}   \|g_2^{n+1}\|_{L^\infty(\Sigma)}^2  +    \frac{(\dt)^2}{32T\alpha} \sum_{n=0}^{M-1} \|\Lambda^{n+1}\|_{L^2(\Sigma)}^2.
\end{alignat*}
To estimate the second term we use \eqref{errorRR2} to bound
 \begin{alignat*}{1}
 \dt \bl \tilde{g}_2^{n+1}, \Lambda^{n+1} \br=
 (U^{n+1}-U^n, \tilde{g}_2^{n+1} )_f +\dt \nu_f (\nabla U^{n+1}, \nabla \tilde{g}_2^{n+1})_f -\dt (h_2^{n+1}, \tilde{g}_2^{n+1})_f . 
\end{alignat*}
Thus, we have 
\begin{alignat*}{1}
 \frac{\dt}{\alpha} \sum_{n=0}^{M-1} \bl \tilde{g}_2^{n+1}, \Lambda^{n+1} \br \le &  \frac{1}{\alpha} \sum_{n=0}^{M-1}  (U^{n+1}-U^n, \tilde{g}_2^{n+1} )_f + \frac{\dt \nu_f}{8}  \sum_{n=0}^{M-1} \|\nabla U^{n+1}\|_{L^2(\Omega_f)}^2 \\
 &+C  \dt \sum_{n=0}^{M-1}   \Bigg( \bigg(\frac{1+\nu_f}{\alpha^2} \bigg) \| \tilde{g}_2^{n+1}\|_{H^1(\Omega_f)}^2+  \| h_2^{n+1}\|_{L^2(\Omega_f)}^2 \Bigg).
\end{alignat*}
After using a summation by parts formula and using that $U^0=0$, we get
\begin{alignat*}{1}
&  \frac{1}{\alpha} \sum_{n=0}^{M-1}  (U^{n+1}-U^n, \tilde{g}_2^{n+1} )_f\\
 =&\frac{1}{\alpha} \sum_{n=1}^{M-1}  (U^n, \tilde{g}_2^{n}-\tilde{g}_2^{n+1})_f+\frac{1}{\alpha} (U^M, \tilde{g}_2^{M})_f- \frac{1}{\alpha} (U^0, \tilde{g}_2^{1})_f \\
=& -\frac{(\dt)^2}{ \alpha} \sum_{n=1}^{M-1}  (U^n, \pdt^2 \Ll^{n})_f+\frac{1}{\alpha} (U^M, \tilde{g}_2^{M})_f \\
\le & \frac{\dt}{32 T} \sum_{n=0}^{M-1} \|U^n\|_{L^2(\Omega_f)}^2+  \frac{C T(\dt)^3}{\alpha^2} \sum_{n=1}^{M-1} \| \pdt^2 \Ll^n\|_{L^2(\Omega_f)}^2 +  \frac{1}{32} \|U^M\|_{L^2(\Omega_f)}^2+\frac{C}{\alpha^2} \|\tilde{g}_2^{M}\|_{L^2(\Omega_f)}^2.
\end{alignat*}
Combining the above inequalities  we get 
\begin{alignat*}{1}
\frac{\dt}{\alpha} \sum_{n=0}^{M-1} \bl g_2^{n+1}, \Lambda^{n+1} \br \le &   \frac{\dt \nu_f}{8} \sum_{n=0}^{M-1} \|\nabla U^{n+1}\|_{L^2(\Omega_f)}^2+\frac{\dt}{16 T} \sum_{n=1}^{M} \Big( \frac{1}{2}\|U^n\|_{L^2(\Omega_f)}^2+ \frac{\dt}{2\alpha} \|\Lambda^n\|_{L^2(\Sigma)}^2\Big)\\
&+ \frac{1}{32} \|U^M\|_{L^2(\Omega_f)}^2+ C \Psi_2(M) \\
\le &  \frac{1}{8} \sum_{n=0}^{M-1} \Ss^{n+1}+  \frac{1}{8} \max_{1\le n \le M}  \Z^n+ C \Psi_2(M).
\end{alignat*}
To bound  $\frac{\dt(k-1)}{2}  \sum_{n=0}^{M-1} \bl g_1^{n+1}, \Lambda^{n+1} \br$ is much easier since $g_1^{n+1}$ already belongs to $V_f$ and we do not have to use a cut-off function technique.  Using similar arguments as bounding the other term we can prove 
\begin{alignat*}{1}
\frac{\dt(k-1)}{2}  \sum_{n=0}^{M-1} \bl g_1^{n+1}, \Lambda^{n+1} \br \le &    \frac{(k-1)}{8}  \sum_{n=0}^{M-1} \Ss^{n+1}\\
&+\frac{(k-1)}{8} \max_{1\le n \le M}  \Z^n+ C(k-1) \Psi_1(M).
\end{alignat*}

\end{proof}

We would like to mention that there are a few special cases where we can relax (ii) of Assumption \ref{def:phi}. The first case is when  $\Sigma$ is straight line and $\Omega_f$ is a rectangle with two sides perpendicular to $\Sigma$. In this case $\nabla \uu \cdot n$ vanishes on those two sides so we do not have to make $\phi$ to vanish there. Then, one can construct $\phi$ so (iv) can be improved: $\|\nabla \phi\|_{L^2(\Omega_f)} \le C$. 
This will give  an improved  estimate of $O(\dt)$ for the final theorem below instead of $O\bigg( \dt \sqrt{T + \log(\frac{1}{\dt})}\bigg)$. The other case is when $\Sigma$ does not touch the boundary of $\Omega$.

\subsection{Proof of the main result}
In this section we  put the above estimates together to prove our  main result.  In order to do this, we need to estimate $D(M)$ and $\Psi(M)$.  
We will use the Bochner norms $\|v\|_{L^2(a,b; X)}=\Big(\int_a^b \|v(\cdot, s)\|_X^2 ds\Big)^{1/2}$ and $\|v\|_{L^\infty(a,b; X)}=\text{ess sup}_{ a \le  s \le b}  \|v(\cdot, s)\|_X$.  

For  $X$ a Sobolev space, it is well known that
\begin{subequations}
\begin{alignat}{1}\label{721}
\| v^{n+1}-v^n\|_X^2 \le&  C \dt \int_{t_n}^{t_{n+1}} \|\pt v(\cdot,s)\|_{X}^2 ds, \\
\| \pdt v^{n+1}-\pt v^{n+1}\|_X^2 \le&  C \dt \int_{t_n}^{t_{n+1}} \|\pt^2 v(\cdot,s)\|_{X}^2 ds, \\
\|\pdt v^{n+1}-\pt v^{n+1/2}\|_X \leq& C(\dt)^3 \int_{t_n}^{t_{n+1}} \|\pt^3 v(\cdot,s)\|_X ds,\\
\int_{a}^{b} \|v(\cdot,s)\|_X^2 ds \leq & (b-a) \|v\|_{L^\infty(a,b;X)}^2.
\end{alignat}
\end{subequations}
The following identity can easily be shown
\begin{equation*}
 \pdt^2 v^n= \frac{1}{(\dt)^2} \int_{-\dt}^{\dt} \pt^2 v(\cdot, t_n+s) ds.
\end{equation*}
From this we can show that
\begin{equation}\label{621}
\| \pdt^2 v^n\|_{L^2(\Omega_i)}^2 \le  \frac{C}{\dt} \int_{t_{n-1}}^{t_{n+1}} \|\pt^2 v(\cdot,s)\|_{L^2(\Omega_i)}^2 ds, \quad i=s,f.
\end{equation}
Using \eqref{721} and \eqref{621} we can prove the following approximation lemma. 
\begin{lemma}\label{DPSI}
It holds,
\begin{alignat*}{1}
D(N) \le&  CT(\dt)^{2k}\Big(\|\pt^{k+1} \qq\|_{L^2(0,T;L^2(\Omega_s))}^2  + (k-1) \nu_s \|\pt^{k+1} \nabla \ww\|_{L^2(0,T;L^2(\Omega_s))}^2\Big)\\
& \hspace{1.5cm}+C T (\dt)^2 \|\pt^2 \uu\|_{L^2(0,T;L^2(\Omega_f))}^2 +(\dt)^2 \Big( ( \alpha+ \frac{\alpha^2}{\nu_f}) \|\pt \uu\|_{L^2(0,T;L^2(\Sigma))}^2 \\
&  \hspace{1.5cm} + ( \frac{1}{\alpha}+ \frac{1}{\nu_f}) \|\pt \lla\|_{L^2(0,T;L^2(\Sigma))}^2 \Big), \\
\max_{1 \le M \le N} \Psi_1(M) \le &  C (\dt)^2 \Bigg((1+\nu_f)\|\pt \uu\|_{L^2(0,T;H^1(\Omega_f))}^2 + (1+T) \|\pt^2 \uu\|_{L^2(0,T;L^2(\Omega_f))}^2 \\
&\hspace{1.5cm}+  \|\pt \uu\|_{L^\infty(0,T;L^2(\Omega_f))}^2 \Big), \\
\max_{1 \le M \le N} \Psi_2(M) \le &  C (\dt)^2  \Bigg( \frac{\nu_f^2(1+\nu_f)}{\alpha^2} \bigg( \|\pt \uu\|_{L^2(0,T;H^2(\Omega_f))}^2+ (1+ \log(\frac{1}{\dt}))   \|\nabla \pt \uu\|_{L^2(0,T; L^\infty(\Omega_f))}^2\bigg) \\
 &  \hspace{1.5cm}+ \bigg(1+\frac{ \nu_f^2 T}{\alpha^2}\bigg)  \|\pt^2 \uu\|_{L^2(0,T;H^1(\Omega_f))}^2  +\frac{\nu_f^2}{\alpha^2} \| \nabla \pt \uu\|_{L^\infty(0,T; L^2(\Omega_f))}^2  \\
 &\hspace{1.5cm}+ \frac{T}{\alpha}  \|\pt \lla\|_{L^2(0,T;L^\infty(\Sigma))}^2   \Bigg).
\end{alignat*}
\end{lemma}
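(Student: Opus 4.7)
The plan is to estimate each summand in $D(N)$, $\Psi_1(M)$, and $\Psi_2(M)$ using the approximation bounds in \eqref{721}--\eqref{621} together with the trivial embedding $\int_a^b \|v\|_X^2 \,ds \le (b-a)\|v\|_{L^\infty(a,b;X)}^2$, and then summing in $n$. Most of the work is routine Taylor-in-time bookkeeping; the one genuinely delicate piece is controlling $\tilde g_2^{n+1}$ in $H^1(\Omega_f)$, and this is where the logarithm in the final estimate enters.

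For $D(N)$: the truncation errors $h_1^{n+1}$ and (when $k=2$) $\nabla h_3^{n+1}$ reduce to backward-Euler errors when $k=1$ and midpoint errors when $k=2$; both cases fit the unified bound $\|h_1^{n+1}\|^2 \le C\dt^{2k-1}\int_{t_n}^{t_{n+1}}\|\pt^{k+1}\qq\|^2\,ds$, so summation with the factor $\dt\,T$ produces the $CT\dt^{2k}$ contributions. The standard backward-Euler truncation $\|h_2^{n+1}\|^2 \le C\dt\int\|\pt^2\uu\|^2\,ds$ yields the $CT\dt^2\|\pt^2\uu\|^2$ piece. For the boundary part, expand $g_3^{n+1}, g_4^{n+1}$ in terms of $g_1^{n+1}, g_2^{n+1}$, apply $\|g_1^{n+1}\|_{L^2(\Sigma)}^2 \le C\dt\int\|\pt\uu\|_{L^2(\Sigma)}^2\,ds$ and $\|g_2^{n+1}\|_{L^2(\Sigma)}^2 \le C\dt\int\|\pt\lla\|_{L^2(\Sigma)}^2\,ds$, and distribute the weight $\bigl(1/\alpha + C_{\text{tr}}^2/\nu_f\bigr)$ to recover the coefficients $\alpha+\alpha^2/\nu_f$ in front of $\|\pt\uu\|$ and $1/\alpha+1/\nu_f$ in front of $\|\pt\lla\|$.

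For $\Psi_1(M)$ the work is direct: apply \eqref{721} to $\|g_1^{n+1}\|_{H^1(\Omega_f)}^2$ and $\|h_2^{n+1}\|^2$, apply \eqref{621} to bound $\dt\sum\|\pdt^2\uu^n\|^2$ by $C\|\pt^2\uu\|_{L^2(0,T;L^2(\Omega_f))}^2$, and control the terminal term by $\|g_1^M\|_{L^2(\Omega_f)}^2 \le C\dt^2\|\pt\uu\|_{L^\infty(0,T;L^2(\Omega_f))}^2$ via the $L^\infty$-in-time embedding. For $\Psi_2(M)$, every term is treated analogously except that $\tilde g_2^{n+1} = \phi(\tilde\lla^{n+1} - \tilde\lla^n)$ carries the cut-off factor. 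The $L^2$ bound is immediate from $\|\phi\|_\infty \le 1$. The $H^1$ bound uses the product rule
\begin{equation*}
\nabla \tilde g_2^{n+1} \;=\; (\nabla\phi)\,(\tilde\lla^{n+1}-\tilde\lla^n) \;+\; \phi\,\nabla(\tilde\lla^{n+1}-\tilde\lla^n).
\end{equation*}
The second summand is controlled in $L^2$ via $\|\phi\|_\infty\le 1$ together with Assumption \ref{ntilde}, which ensures $\nabla\tilde\lla$ is expressible through $D^2\uu$ and $\nabla\uu$; this produces a $\dt^2 \|\pt\uu\|_{L^2(0,T;H^2(\Omega_f))}^2$ contribution. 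The first summand is controlled by Hölder's inequality as $\|\nabla\phi\|_{L^2(\Omega_f)}\,\|\tilde\lla^{n+1}-\tilde\lla^n\|_{L^\infty(\Omega_f)}$, where Assumption \ref{def:phi}(iv) gives $\|\nabla\phi\|_{L^2(\Omega_f)}^2 \le C(1+\log(1/\dt))$ and the $L^\infty$-in-space version of \eqref{721} gives $\|\tilde\lla^{n+1}-\tilde\lla^n\|_{L^\infty}^2 \le C\dt\int\|\nabla\pt\uu\|_{L^\infty}^2\,ds$. The $\pdt^2\Ll^n$ sum is treated by \eqref{621} applied to $\Ll = \phi\,\tilde\lla$, whose time derivatives at each point are just $\phi\,\pt^j\tilde\lla$; the terminal $\tilde g_2^M$ piece by $\|\phi\|_\infty\le 1$; and the $L^\infty(\Sigma)$ sum by $\|g_2^{n+1}\|_{L^\infty(\Sigma)}^2 \le C\dt\int\|\pt\lla\|_{L^\infty(\Sigma)}^2\,ds$.

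The main obstacle is precisely the first summand of $\nabla\tilde g_2^{n+1}$: this is the only place in the entire argument where $\|\nabla\phi\|_{L^2(\Omega_f)} = O(\sqrt{\log(1/\dt)})$ appears, so making sure the logarithm attaches only to $\|\nabla\pt\uu\|_{L^2(0,T;L^\infty(\Omega_f))}^2$ while every other term retains a clean $\dt^2$ rate with the stated dependence on $\nu_f$ and $\alpha$ is the essential bookkeeping step, and is exactly what permits the nearly first-order estimate in the subsequent main theorem.
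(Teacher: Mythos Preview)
Your proposal is correct and follows essentially the same route as the paper: both treat $D(N)$ and $\Psi_1(M)$ by direct application of \eqref{721}--\eqref{621}, and both isolate the $H^1(\Omega_f)$ bound on $\tilde g_2^{n+1}$ as the one nontrivial term in $\Psi_2(M)$, handling it via the product rule and Assumptions~\ref{ntilde} and~\ref{def:phi}(i),(iv) so that the logarithmic factor from $\|\nabla\phi\|_{L^2(\Omega_f)}^2$ attaches only to $\|\nabla\pt\uu\|_{L^2(0,T;L^\infty(\Omega_f))}^2$. The paper writes the three-term expansion of $\nabla\tilde g_2^{n+1}$ slightly more explicitly (separating the $\nabla\tilde\bn$ contribution), but the substance is identical.
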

\begin{proof}
The estimate for $D(N)$ and $\Psi_1(M)$ are  straightforward application of \eqref{721} and \eqref{621}.  For $\Psi_2(M)$  let us bound the most difficult term. Using the definition of $\tilde{g}_2^{n+1}$, the product rule, Assumption \ref{ntilde} and (i) and (iv) of Assumption \ref{def:phi} we obtain  
\begin{alignat*}{1}
& \dt\bigg(\frac{1+\nu_f}{\alpha^2} \bigg) \sum_{n=0}^{M-1}   \|  \nabla \tilde{g}_2^{n+1}\|_{L^2(\Omega_f)}^2 \\
\le & \dt\bigg(\frac{\nu_f^2(1+\nu_f)}{\alpha^2} \bigg) \sum_{n=0}^{M-1}  \Big(\| \nabla \phi \|_{L^2(\Omega_f)}^2 \|\nabla(\uu^{n+1}-u^n)\|_{L^\infty(\Omega_f)}^2 + \|\nabla \tilde{\bn}\|_{L^\infty(\Omega_f)}^2\| \nabla(\uu^{n+1}-u^n)\|_{L^2(\Omega_f)}^2 \\
&  \hspace{3.8cm}+\|D^2 (\uu^{n+1}-\uu^n)\|_{L^2(\Omega_f)}^2 \Big) \\ 
\le &\dt\bigg(\frac{\nu_f^2(1+\nu_f)}{\alpha^2} \bigg) \sum_{n=0}^{M-1}  \Big( \big(1+\log(\frac{1}{\dt}\big)) \|\nabla(\uu^{n+1}-\uu^n)\|_{L^\infty(\Omega_f)}^2 + \|D^2 (\uu^{n+1}-\uu^n)\|_{L^2(\Omega_f)}^2 \Big).
\end{alignat*}
Here $D^2 u$ denotes the Hessian of $u$. 

Using \eqref{721} we get 
\begin{alignat*}{1}
\dt\bigg(\frac{1+\nu_f}{\alpha} \bigg) \sum_{n=0}^{M-1}   \|  \nabla \tilde{g}_2^{n+1}\|_{L^2(\Omega_f)}^2 
 \le & C  (\dt)^2\bigg(\frac{\nu_f^2(1+\nu_f)}{\alpha^2} \bigg) \Bigg(  \|\pt \uu\|_{L^2(0,T;H^2(\Omega_f))}^2\\
 &\hspace{4cm}+ (1+ \log(\frac{1}{\dt}))   \|\nabla \pt \uu\|_{L^2(0,T; L^\infty(\Omega_f))}^2 \Bigg).
\end{alignat*}
We leave the bounds of the remaining terms to the reader. These follow from \eqref{721} and \eqref{621}.   
\end{proof}

Now we can state and prove the main result of the paper. 
\begin{theorem}Under Assumptions \ref{ntilde}, \ref{def:phi}, and assuming that the solution is smooth enough so that   $\mathsf{Y}$ defined below and $\|\nabla \pt \uu\|_{L^2(0,T; L^\infty(\Omega_f))}^2$ are bounded we have
\begin{alignat*}{1}
\max_{1 \le M \le N} \Z^{M}+ \sum_{n=0}^{N-1} \Ss^{n+1} \le    C (\dt)^2 \mathsf{Y}+ \frac{C \nu_f^2(1+\nu_f) (\dt)^2}{\alpha^2} \big(1+\log(\frac{1}{\dt})\big)  \|\nabla \pt \uu\|_{L^2(0,T; L^\infty(\Omega_f))}^2,
\end{alignat*}
where
\begin{alignat*}{1}
\mathsf{Y}:=& (\dt)^{2k-2}\bigg(\|\pt^{k+1}\qq\|_{L^2(0,T;L^2(\Omega_s))}^2 + (k-1)\nu_s \|\pt^{k+1}\nabla \ww\|_{L^2(0,T;L^2(\Omega_s))}^2\bigg) \\
&+\bigg(1+\big(1+\frac{\nu_f^2}{\alpha^2}\big)T\bigg)\|\pt^2\uu\|_{L^2(0,T;H^1(\Omega_f))}^2 + \big(\alpha + \frac{\alpha^2}{\nu_f}\big)\|\pt \uu\|_{L^2(0,T;L^2(\Sigma))}^2 \\
&+ \big( \frac{1}{\alpha}+\frac{1}{\nu_f}\big) \|\pt \lla \|_{L^2(0,T;L^2(\Sigma))}^2 + (1+\nu_f)\big(1+ \frac{\nu_f^2}{\alpha^2}\big)\|\pt \uu \|_{L^2(0,T;H^2(\Omega_f))}^2 \\
&+ \big(1+\frac{\nu_f^2}{\alpha^2}\big)\|\pt \uu \|_{L^\infty(0,T;H^1(\Omega_f))}^2 +\frac{T}{\alpha}\|\pt \lla\|_{L^2(0,T;L^\infty(\Sigma))}^2.
\end{alignat*}
\begin{comment}
(1+T) \|\pt^2 \uu\|_{L^2(0,T;L^2(\Omega_f))}^2+ \frac{T \nu_f^2}{\alpha^2} \|\nabla \pt^2 \uu\|_{L^2(0,T;L^2(\Omega_f))}^2+ \frac{\nu_f^2}{\alpha^2} \|\pt \uu\|_{L^2(0,T;H^2(\Omega_f))}^2\\
&+ \|\pt \uu\|_{L^2(0,T;H^1(\Omega_f))}^2+ ( \alpha+ \frac{\alpha^2}{\nu_f}) \|\pt \uu\|_{L^2(0,T;L^2(\Sigma))}^2+ \|\pt \uu\|_{L^\infty(0,T;L^2(\Omega_f))}^2 \\
& + \frac{T}{\alpha}  \|\pt \lla\|_{L^2(0,T;L^\infty(\Sigma))}^2+ \big( \frac{1}{\alpha}+ \frac{1}{\nu_f}\big) \|\pt \lla\|_{L^2(0,T;L^2(\Sigma))}^2 \\
& + \|\pt^2 \qq\|_{L^2(0,T;L^2(\Omega_s))}^2 + (k-1) \nu_s \|\pt^2 \nabla \ww\|_{L^2(0,T;L^2(\Omega_s))}^2.
\end{comment}
\end{theorem}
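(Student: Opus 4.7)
The plan is to assemble the three preceding ingredients (Lemma \ref{errorLemma1}, Lemma \ref{Festimate}, and Lemma \ref{lemmastep3}) into a single Gr\"onwall-free identity, and then feed in the right-hand side estimates from Lemma \ref{DPSI}. First I would sum the per-step identity of Lemma \ref{errorLemma1} from $n=0$ to $M-1$ for an arbitrary $M$ with $1 \le M \le N$. Because the initial errors vanish, $\Z^0 = 0$, and the left-hand side telescopes to
\begin{equation*}
\Z^M + \sum_{n=0}^{M-1} \Ss^{n+1} = \dt\sum_{n=0}^{M-1} F^{n+1} + \frac{\dt}{\alpha}\sum_{n=0}^{M-1} \bl g_3^{n+1}, \Lla^{n+1}\br.
\end{equation*}
The key observation is that both quantities on the right are controlled by the two bounds already established: Lemma \ref{Festimate} absorbs the first sum into $\tfrac{1}{4}\max_{1 \le n \le M}\Z^n + \tfrac{1}{4}\sum_{n=0}^{M-1}\Ss^{n+1} + CD(M)$, and Lemma \ref{lemmastep3} absorbs the second sum into $\tfrac{1}{4}\max_{1\le n \le M}\Z^n + \tfrac{1}{4}\sum_{n=0}^{M-1}\Ss^{n+1} + C\Psi(M)$.

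Substituting and combining the absorbed pieces yields, for every $M \in \{1,\ldots,N\}$,
\begin{equation*}
\Z^M + \tfrac{1}{2}\sum_{n=0}^{M-1}\Ss^{n+1} \le \tfrac{1}{2}\max_{1\le n\le M}\Z^n + C\bigl(D(M) + \Psi(M)\bigr).
\end{equation*}
The only subtlety is a standard trick to get rid of the $\max$ on the right: choose $M^\star$ realizing $\max_{1\le n\le N}\Z^n$ and apply the inequality at $M=M^\star$; since $\Z^{M^\star} \ge \Z^n$ for $n\le M^\star$, one gets $\tfrac{1}{2}\Z^{M^\star} \le C(D(M^\star) + \Psi(M^\star)) \le C(D(N) + \Psi(N))$, which yields the desired bound on $\max_M \Z^M$. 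Feeding this back into the inequality at $M=N$ then controls $\sum_{n=0}^{N-1}\Ss^{n+1}$ by the same right-hand side (possibly with a different constant).

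The final step is purely bookkeeping: substitute the estimates of Lemma \ref{DPSI} for $D(N)$, $\Psi_1(N)$, $\Psi_2(N)$ into $C(D(N) + \Psi(N))$, regroup terms, and identify the grouping that forms the quantity $\mathsf{Y}$ and the isolated logarithmic term $\tfrac{C\nu_f^2(1+\nu_f)(\dt)^2}{\alpha^2}(1+\log\tfrac{1}{\dt})\|\nabla \pt\uu\|_{L^2(0,T;L^\infty(\Omega_f))}^2$ on which the factor $1+\log(1/\dt)$ appears. The $(\dt)^{2k}$ factor in $D(N)$ and the $(\dt)^2$ factors in $\Psi_1,\Psi_2$ uniformly produce the nearly first-order rate.

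I expect no real obstacle beyond careful accounting: all the hard analytical work has been done in Lemmas \ref{errorLemma1}--\ref{DPSI}. The single place that needs slight care is verifying that the maximum-absorption argument can simultaneously control $\max_M \Z^M$ and $\sum_n \Ss^{n+1}$ with the same kind of right-hand side; this is handled as above by first bounding $\Z^{M^\star}$ (which also bounds $\Z^N$) and then reading off the sum from the inequality at $M=N$. The only source of the logarithmic factor is the cut-off function $\phi$ in Assumption \ref{def:phi} (iv), which enters only through the $\|\nabla\phi\|_{L^2(\Omega_f)}^2$ term in the bound on $\|\nabla \tilde{g}_2^{n+1}\|_{L^2(\Omega_f)}^2$ in Lemma \ref{DPSI}; keeping that contribution separated from the rest is what produces the stated form of the estimate.
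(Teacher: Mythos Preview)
Your approach is essentially identical to the paper's: sum Lemma \ref{errorLemma1}, apply Lemmas \ref{Festimate} and \ref{lemmastep3}, absorb the $\max$ on the right, and invoke Lemma \ref{DPSI}. One small slip: you claim $D(M^\star)+\Psi(M^\star)\le D(N)+\Psi(N)$, but $\Psi(M)$ is not monotone in $M$ because of the endpoint terms $\|g_1^{M}\|_{L^2(\Omega_f)}^2$ and $\|\tilde g_2^{M}\|_{L^2(\Omega_f)}^2$; the paper (and Lemma \ref{DPSI}) avoids this by bounding $\max_{1\le M\le N}\bigl(\Psi(M)+D(M)\bigr)$ directly, which is the trivial fix to your argument.
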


\begin{proof}
Taking the sum in Lemma \ref{errorLemma1} and using Lemmas \ref{Festimate} , \ref{lemmastep3} 
we have  for all $1 \le M \le N$
\begin{alignat*}{1}
\Z^{M}+ \frac{1}{2} \sum_{n=0}^{M-1} \Ss^{n+1} \le    \frac{1}{2} \max_{1\le n \le M}  \Z^n + C(\Psi(M)+ D(M)).
\end{alignat*}
Taking the maximum over $1 \le M \le N$ we get 
\begin{alignat*}{1}
\frac{1}{2} \max_{1 \le M \le N} \Z^{M}\le    C\max_{1 \le M \le N} (\Psi(M)+ D(M)) .
\end{alignat*}
Hence, we also get
\begin{alignat*}{1}
\sum_{n=0}^{M-1} \Ss^{n+1}\le    C\max_{1 \le M \le N} (\Psi(M)+ D(M)) .
\end{alignat*}
The result now follows after using Lemma \ref{DPSI}.
\end{proof} 

\section{Construction of $\phi$}\label{sec:phi}
In this section, we will provide a construction of $\phi$ when $\Sigma$ is a straight line and $\Omega_f$ is a  rectangle with two sides perpendicular to $\Sigma$.  The construction carries over to the case when $\Sigma$ is  a straight line and the domain $\Omega_f$ is a polygon, but for simplicity we consider the simplified case only. We anticipate that the construction holds in more general setting when $\Sigma$ and $\Omega_f \backslash \Sigma$ are sufficiently smooth.

Let $\Omega_f=[0,1] \times [0, 1]$ which is illustrated in  Figure \ref{fig:simpleOmegf}. Note that $\Sigma=[0,1] \times \{1\}$ marks the top boundary of $\Omega_f$. We begin the construction by dividing $\Omega_f$ into five regions, $\{K_i\}_{i=1}^5$ which are depicted in Figure \ref{fig:simpleOmegf}. We may then define $\phi(x)=\phi(x_1,x_2)$ piece-wise as follows:
\begin{equation}\label{simplePhi}
\phi(x):=\begin{cases}
\frac{x_1}{1-(1-\Dt)x_2}, & x \in K_1, \\
1, & x \in K_2, \\
\frac{1-x_1}{1-(1-\Dt)x_2}, & x \in K_3, \\
4x_2 (1-x_1)(1-\Dt), & x \in K_4, \\
4x_1 x_2 (1-\Dt), & x \in K_5.
\end{cases}
\end{equation}

\begin{figure}[h]
\includegraphics[width=0.7\textwidth]{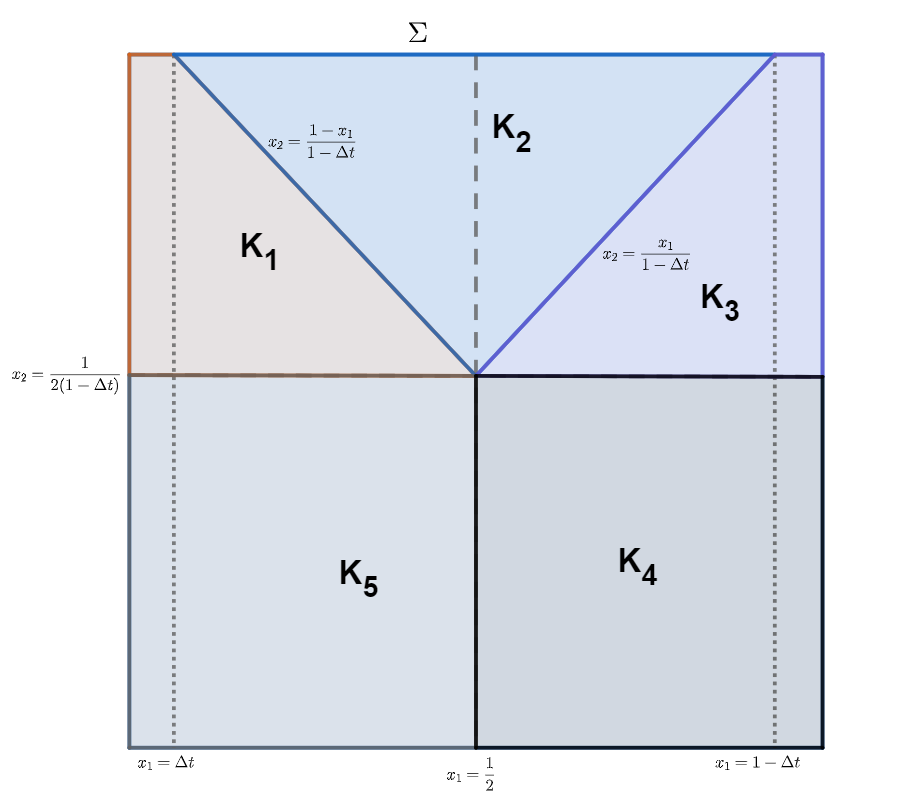}
\caption{$\Omega_f = [0,1]^2$ with the regions $\{K_i\}_{i=1}^5$.}\label{fig:simpleOmegf}
\end{figure}

It is straightforward to check that this $\phi$ is piecewise-continuous and satisfies conditions (i)-(iii) in Definition \ref{def:phi}. For condition (iv), we note that
\begin{equation*}
    \int_{\Omega_f}|\nabla \phi|^2 = \big(\frac{2}{1-\Dt} + \frac{2(1-\Dt)}{3}\big) \log{\frac{1}{2\Dt}} + \frac{2(1-\Dt)}{3} + \frac{2}{3(1-\Dt)}.
\end{equation*}
Thus, if $\Dt < \frac{1}{2}$, we have
\begin{equation*}
    \int_{\Omega_f}|\nabla \phi|^2 \leq C(1+\log\frac{1}{\Dt})
\end{equation*}

\section{Numerical Experiments}\label{num}

In this section, we seek to verify our theoretical results through numerical experiments. As our analysis above is carried out in a time-semi-discrete framework, the spacial discretization may be done with any appropriate numerical method. For simplicity, we chose to use linear finite elements with mesh parameters defined below for each case. To accommodate our Lagrange multiplier method, we require that the meshes on $\Omega_s$ and $\Omega_f$ match at the interface $\Sigma$. In all cases below, we set the coefficients $\nu_f = \nu_s = \alpha = 1$. 
\section{Numerical experiments for simple case with uniform mesh}

 For our first case, We take $\Omega=[0,1]^2$ and $\Omega_f=[0,1] \times [0,3/4]$ and $\Omega_s=[0,1] \times [3/4, 1]$ and of course $\Sigma=\overline{\Omega_f} \cap \overline{\Omega_s}$. On each subdomain, we use a uniform mesh with $h=\Dt$. We run the system to time $T=0.25$.

For the Parabolic-Parabolic ($k=1$) coupling, we take the exact solution to be
\begin{alignat*}{1}
u(t,x) &=e^{-2 \pi^2t } \sin(\pi x_1) \sin(\pi x_2), \\
w(t,x) &=e^{-2 \pi t} \sin(\pi x_1) \sin(\pi x_2),\\
\lla(t,x) &= \pi e^{-2 \pi t} \sin(\pi x_1)\cos(\pi x_2).
\end{alignat*}

For the Parabolic-Hyperbolic ($k=2$) coupling, we take the exact solution to be
\begin{alignat*}{1}
u(t,x)&=(10^{-3})e^t x_1(1-x_1)x_2(1-x_2), \\
w(t,x)&=(10^{-3})e^t x_1(1-x_1)x_2(1-x_2), \\
\lla(t,x)&=(10^{-3})e^t x_1(1-x_1)(1-2x_2).
\end{alignat*}

In the table below, we measure the error in the $L^2$-norm at the final time $T=0.25$.
 
\begin{figure}[h]
\begin{center}

\begin{tabular}{|c||c|c|c|c||c|c|c|c|}
\hline
{} & \multicolumn{4}{|c||}{$k=1$} & \multicolumn{4}{|c|}{$k=2$} \\
\hline
  {} & \multicolumn{2}{|c|}{Rates} &  \multicolumn{2}{|c||}{Errors} &  \multicolumn{2}{|c|}{Rates} &  \multicolumn{2}{|c|}{Errors} \\
\hline
$\Dt$ &  $U$ &  $W$ &  $U$ & $W$ & $U$ &  $Q$ & $U$ & $Q$  \\
\hline
$(\frac{1}{2})^2$ & -- & -- 
&0.04 & 0.09 & -- & --
& 4.48e-06 & 9.48e-06
\\
\hline
$(\frac{1}{2})^3$&-0.30 &0.59 & 0.05 & 0.06 & 2.16 & 1.94 & 1.01e-06 & 2.45e-06
\\
\hline
$(\frac{1}{2})^4$ &0.22 &  0.93& 0.04 & 0.03 & 0.96 & 1.02 & 5.19e-07 & 1.22e-06
\\
\hline
$(\frac{1}{2})^5$ &1.08 & 1.67&0.02 & 9.6e-03 & 0.47 & 0.89 & 3.76e-07 & 6.67e-07 
\\
\hline
$(\frac{1}{2})^6$ &2.43 & 3.71 & 3.5e-03 & 7.3e-04 & 0.74 & 0.88 & 2.25e-07 & 3.62e-07
\\
\hline 
$(\frac{1}{2})^7$ &2.89 & 1.17 &
 4.8e-04 & 3.3e-04 & 0.88 & 0.93 & 1.22e-07 & 1.89e-07
 \\
\hline
$(\frac{1}{2})^8$ &1.77 & 0.85 & 1.4e-04  & 1.8e-04 & 0.94 & 0.97 & 6.36e-08 & 9.69e-08
\\
\hline
$(\frac{1}{2})^{9}$ &1.37 & 0.97 & 5.4e-05 & 9.6e-05  & 0.97 & 0.98 & 3.24e-08 & 4.91e-08
\\
\hline

\end{tabular}
\end{center}
\caption{Convergence rates and errors for the parabolic-parabolic problem in the special case where $\Sigma$ is perpendicular to $\partial \Omega$.}\label{fig:conv1}
\end{figure}
Clearly, we can see that all the error terms behave as $O(\Dt)$ as $\Dt \to 0$, which agrees well with our theoretical results. 

 \subsection{Numerical experiments for less simple case with non-uniform mesh}

For our next set of numerical experiments, we investigate the case where $\Sigma$ is not perpendicular to $\partial \Omega$. We again take $\Omega=[0,1]^2$, however $\Sigma$ is now represented by the line $x_2 = \frac{x_1}{2} + \frac{1}{4}$. Furthermore, we generate a non-uniform mesh such that the interface $\Sigma$ is aligned with the nodes of the mesh (See Figure \ref{fig:mesh1}). We run our method on this grid for $\Delta t = \{(\frac{1}{2})^{n+2}\}_{n=0}^{8}$. For each $n$, the maximum element diameter, $h_{max}$, is given (to 4 significant digits) in the list below. From this we can see that $h$ is roughly $O(\Delta t)$. Additionally, as before, we run to time $T=0.25$.

\begin{alignat*}{1}
(n, h_{max}) = \{&(0,0.3125), \ (1,0.1574), \ (2,0.0794), \ (3,0.0398), \ (4,0.0199), \ (5,0.0099), \\
&(6,0.0050), \ (7,0.0025), \ (8,0.0012) \}.
\end{alignat*}

\begin{figure}[h]
\includegraphics[width=0.5\textwidth]{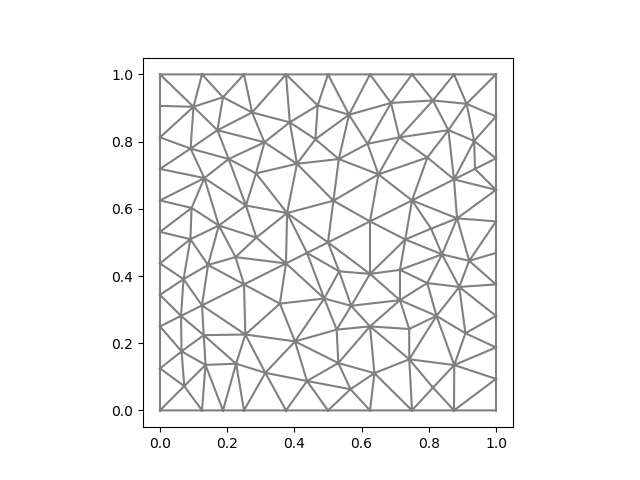}
\caption{An example of the non-uniform mesh for $n=1$. \label{fig:mesh1}}
\end{figure}

We take the exact solution to be
\begin{alignat*}{1}
\uu(t,x)&=(10^{-3})e^t x_1(1-x_1)x_2(1-x_2) \\
\ww(t,x)&=(10^{-3})e^t x_1(1-x_1)x_2(1-x_2) \\
\lla(t,x)&=\left(\frac{10^{-3}}{\sqrt{5}}\right)e^t \left( 2x_1(1-x_1)(1-2x_2) - (1-2x_1)x_2(1-x_2) \right),
\end{alignat*}
and we note that $\lla = \nabla \uu \cdot \bn$ no longer vanishes on $\partial \Omega$.

In the table below, we again measure the $L^2$-norm of $U$ and $W$ at the final time $T=0.25$. However, we also measure $\big(\dt \sum_{n=1}^{N} \|\nabla U^n\|_{L^2(\Omega_f)}^2 \big)^{1/2}$ and $\big(\dt \sum_{n=1}^{N} \|\nabla W^n\|_{L^2(\Omega_s)}^2 \big)^{1/2}$, where $N=\frac{T}{\dt}$.

\begin{figure}[h]
\begin{center}

\begin{tabular}{|c||c|c|c|c|c|c|c|c|}
\hline
  {} & \multicolumn{4}{|c|}{Rates} &  \multicolumn{4}{|c|}{Errors}
\\
\hline
$\Dt$ &  Rate $U$ & Rate $W$ & Rate $\nabla U$ & Rate $\nabla W$&  $U$ & $W$ & $\nabla U$ & $\nabla W$
\\
\hline
$(\frac{1}{2})^2$ & -- & -- & -- & -- & 4.92e-07 & 1.27e-06 &
{4.60e-06 } &{5.28e-06 }
\\
\hline
$(\frac{1}{2})^3$&
 1.28 & 1.87 
& {0.99} & {1.45} & 2.02e-07 & 3.48e-07 &
2.33e-06 & 1.93e-06 \\
\hline
$(\frac{1}{2})^4$ & 0.59 & 0.83
& {1.12} & {0.78} & 1.35e-07 & 1.95e-07 & 1.07e-06 & 1.13e-06 
\\
\hline
$(\frac{1}{2})^5$ & 0.45 & 0.87 
& {0.77} & {0.81} & 9.85e-08 & 1.07e-07 & 6.26e-07 & 6.43e-07 
\\
\hline
$(\frac{1}{2})^6$ & 0.71 & 0.81
& {0.66} & {0.69} & 6.01e-08 & 6.09e-08 & 3.96e-07  & 4.00e-07 
\\
\hline 
$(\frac{1}{2})^7$ & 0.89 & 0.89
& {0.69} & {0.69} & 3.26e-08 & 3.28e-08 & 2.45e-07 & 2.47e-07
\\
\hline
$(\frac{1}{2})^8$ & 0.94 & 0.95
&{0.74} & {0.75} & 1.69e-08 & 1.70e-08 & 1.46e-07 & 1.47e-07 
\\
\hline
$(\frac{1}{2})^{9}$ & 0.97 & 0.98
& {0.79} & {0.80} & 8.62e-09 & 8.63e-09 & 8.44e-08 & 8.45e-08
\\
\hline
$(\frac{1}{2})^{10}$ & 0.99 &  0.99 
&{0.83} & {0.83} & 4.35e-09 & 4.36e-09 & 4.74e-08 & 4.74e-08
\\
\hline
\end{tabular}
\end{center}
\caption{Convergence rates and errors for the parabolic-parabolic problem for a more general case.}
\label{fig:conv1}
\end{figure}

As before, we see first order convergence in $U$ and $W$. For $\nabla U$ and $\nabla W$, the convergence is slower, however it seems it will eventually approach first order. 

\section{Conclusion}
We analyzed the Robin-Robin splitting method for Parabolic/Parabolic and Parabolic/Hyperbolic coupled systems in a unified, time semi-discrete framework. This leaves many options for spatial discretization, and we present numerical experiments using linear finite elements.  

Of particular interest to us is the potential extension of these results to the FSI problem. However, in this case, the presence of the pressure term in $\Omega_f$ will mean the extension from the interface to the interior must be handled with care. We hope to further explore this problem in a future paper.

\bibliographystyle{abbrv}
\bibliography{papperPPandWP}

\end{document}